\def\EXP{\hbox{\bf EXP}}
\def\lr{\leftrightarrow}
\def\Ra{\Rightarrow}
\def\all{\forall}
\def\ex{\exists}
\def\E{{\mathcal E}}
\def\P{{\mathcal P}}
\def\tfnp{\mathsf{TFNP}}
\def\sat{\mathsf{SAT}}
\def\rfnN{\mathsf{RFN^N_1}}
\def\rfn{\mathsf{RFN_1}}
\def\rf{\mathsf{RFN}}
\def\E{\mathsf{E}}
\def\Co{\mathsf{Co}}
\def\NE{\mathsf{NE}}
\def\NP{\mathsf{NP}}
\def\disconp{\mathsf{DisjCoNP}}
\def\FP{\mathsf{FP}}
\def\con{\mathsf{CON}}
\def\s{\mathsf{S^1_2}}
\def\N{\mathbb{N}}
\def\P{\mathsf{P}}
\def\PP{\mathcal{P}}
\def\H{\mathsf{H}}
\def\UU{\mathsf{U_{\Pi^b_2}}}
\def\UN{\mathsf{U_{NP}}}
\def\UC{\mathsf{U_{CoNP}}}
\def\taut{\mathsf{TAUT}}
\def\Con{\mathsf{Con}}
\def\conN{\mathsf{CON^N}}
\def\EXP{\mathsf{EXP}}
\def\NEXP{\mathsf{NEXP}}
\def\ssat{{\tt Sat}}
\def\staut{{\tt Taut}}
\def\F{\mathsf{F}}  
\def\tauts{\staut_{\Sigma^q_1}}
\def\sats{\ssat_{\Sigma^q_1}}  
\def\disj{\mathsf{Disj}}
\def\V{{\cal V}}
\def\W{{\cal W}}
\def\D{{\sf Dom}}
\def\R{{\sf Rng}}
\title{New relations and separations of conjectures about 
	incompleteness in the finite domain}
\author{Erfan Khaniki\thanks{e.khaniki@gmail.com}}
\affil{Department of Mathematical Sciences\\
	Sharif University of Technology\\
	Tehran, Iran}
\begin{document}
	\maketitle
	\begin{abstract}
		Our main results are in the following three sections:
		\begin{enumerate}
			\item We prove new relations between proof complexity conjectures that are discussed in \cite{pu18}.
			\item We investigate the existence of p-optimal proof systems for $\taut$, assuming the collapse of $\cal C$ and $\sf N{\cal C}$ (the nondeterministic version of $\cal C$) for some new classes $\cal C$ and also prove new conditional independence results for strong theories, assuming nonexistence of p-optimal proof systems.
			\item We construct two new oracles $\V$ and $\W$. These two oracles imply several new separations of proof complexity conjectures in relativized worlds. Among them,
			we prove that existence of a p-optimal proof system for $\taut$ and existence of a
			complete problem for $\tfnp$ are independent of each other in relativized worlds which was not known before.
		\end{enumerate}
	\end{abstract}
	\section{Introduction}
	Proof complexity is a branch of mathematical logic and computational complexity which
	is concerned with the length of proofs of tautologies in different proof systems. The main goal
	is to develop techniques to prove lower bounds for all propositional proof systems, which
	would entail $\sf NP \neq CoNP$. In \cite{pu18}, the main conjectures of proof
	complexity, for example, the existence of p-optimal proof systems or the existence of a complete problem
	$\tfnp$ with respect to the poly time reductions, are investigated from the point of view of logical strength to prove
	these statements. For every one of the main conjectures of proof complexity, an
	equivalent conjecture is proposed in terms of unprovability of statements in strong enough
	theories. Thus, it creates the possibility to use mathematical logic methods to attack these
	conjectures. The logical methods, e.g., a version of forcing used in \cite{kr11}, indeed were
	successful in some very important results in proof complexity. See \cite{kp89,aj94,bkkk17,ri97}.
	
	This paper contains three sections. In the first section, we prove new relations between
	conjectures of \cite{pu18}. In section 2, we investigate the existence of p-optimal proof systems
	$\taut$, assuming the collapse of $\cal C$ and $\sf N{\cal C}$ for some new classes $\cal C$. This investigation leads to
	a generalization of the conjectures in \cite{pu18} to use reductions in the complexity classes of quasipolynomial or subexponential time computable functions. These generalized conjectures have
	the same relation among each other like the relations between conjectures of \cite{pu18}. We prove
	new relations between collapsing complexity classes and the existence of the optimal proof
	systems and we show that proving the collapse of some complexity classes constructively
	implies the existence of optimal proof systems for $\taut$. In addition, we prove for every
	strong enough theory $T$, there is a language $L\in {\sf N}{\cal C}$, such that for every natural definition of
	a language $L' \in {\cal C}$, $T \not\vdash L = L'$ for some classes $\cal C$, assuming that there is no p-optimal proof
	system. In section 3, we construct two new oracles. Relative to the first oracle, a p-optimal proof
	system for $\taut$ exists, but the class of disjoint $\sf CoNP$ problems does not have complete
	problems with respect to poly time functions. Relative to the second oracle, $\tfnp$ is
	equal to $\sf FP$, but length optimal proof systems do not exist. These two oracles imply several
	new separations of conjectures of \cite{pu18} in relativized worlds.
	\section{Preliminaries}
	Following the notation of \cite{pu18}, we use first order theories of arithmetic in a fixed language. The language is the standard language of bounded arithmetic, which is $$\mathcal{L}_{BA}=\{0, S,+,\cdot,|x|,\lfloor x/2\rfloor, x\# y\}.$$ The intended meaning of the $\lfloor x/2\rfloor$ is clear. The meaning of the $|x|$ is $\lceil \log_2(x+1)\rceil$. $x\# y$ is interpreted as $2^{|x|\cdot |y|}$. 
	
	A sharply bounded quantifier is of the form $Qx<|t|,Q\in\{\all,\ex\}$. The class of bounded formulas $\Sigma^b_n$, $\Pi^b_n$, $n \geq 1$ is defined by counting alternations of bounded quantifiers while ignoring sharply bounded
	quantifiers (see \cite{bu86}). The class of $\Delta^b_n$ formulas is the class of $\Sigma^b_i$ formulas that have an equivalent $\Pi^b_i$ definition. The theory $\s$ is consists of basic axioms defining the usual properties of the function symbols and by induction axioms 
	$$\phi(0)\land\all x(\phi(\lfloor x/2\rfloor)\to\phi(x))\to \all x\phi(x)$$ for all $\Sigma^b_1$ formulas. $\s$ is the base theory in provability with respect to the bounded arithmetic hierarchy like ${\bf I}\Sigma_1$ with respect to Peano arithmetic. One of the main properties of $\s$ is that $\Sigma^b_1$ definable functions of $\s$ are poly time computable. Additionally, all of the poly time computable functions are $\Delta^b_1$ in $\s$ (A $\Sigma^b_1$ formula $\phi$ is $\Delta^b_1$ in $T$ iff there exists a $\Pi^b_1$ formula $\psi$ such that $T\vdash \phi \equiv \psi$). For more information about bounded arithmetics see \cite{bu86}.
	
	Let $\mathcal{T}$ be the set of all consistent first order theory $\s\subseteq T$ in $\mathcal{L}_{BA}$ such that the set of axioms of $T$ is poly time decidable. The main objects of concern in \cite{pu18} are unprovability and provability results with respect to the members of $\mathcal{T}$. \cite{pu18} translates the well-known conjectures in complexity theory and proof complexity to unprovability statements about members of $\mathcal{T}$.
	
	Next, we will explain notations and definitions for proof complexity conjectures and their translation in \cite{pu18}.
	\subsection{$\tfnp$ class}
	$\tfnp$ or Total $\NP$ search problem is the class of true $\all\Sigma^b_1$ sentences. More formally, a total $\NP$ search problem is defined by the pair $(p,R)$ such that:
	\begin{enumerate}
		\item $p(x)$ is a polynomial,
		\item $R(x,y)$ is a poly time computable relation ($\Delta^b_1$ in $\s$),
		\item $\N\models \all x \exists y(|y|\leq p(|x|)\land R(x,y))$. 
	\end{enumerate}
	For comparing the complexity of $\tfnp$ problems, reductions are defined as follows.
	\begin{definition}
		Suppose $P$ and $Q$ are in $\tfnp$. We say $P$ is polynomially reducible to $Q$ if the search problem $P$ can be solved in polynomial time using an oracle that gives the answers to the search problem $Q$.
	\end{definition}
	There are different classes of $\tfnp$ which are defined by reductions in the seminal paper \cite{jpy88}. These classes are of the form of {\em all $\tfnp$ problems that are reducible to a $\tfnp$ problem $P$}.
	Another way to compare the complexity of $\tfnp$ problems is by measuring how strong axioms are needed to prove a search problem is total. This approach has reductions implicitly in it. The next definition formalizes this notion which is defined in \cite{pu18}.
	\begin{definition}
		Suppose $T$ is in $\mathcal{T}$. We say $(p,R)$ is provably total in $T$ or $(p,R)\in \tfnp(T)$ iff there exists a pair $(q,\phi)$ such that:
		\begin{enumerate}
			\item $q$ is a polynomial,
			\item $\phi(x,y)$ is $\Delta^b_1$ in $\s$,
			\item $\mathbb{N}\models \all x,y((|y|\leq p(|x|) \land R(x,y))\equiv (|y|\leq q(|x|) \land \phi(x,y)))$,
			\item $T\vdash \all x \exists y(|y|\leq q(|x|) \land \phi(x,y))$.
		\end{enumerate}
		Also, we define $\tfnp^*(T)$ as the class of all $\tfnp$ problems that is reducible to a problem in $\tfnp(T)$.
	\end{definition}
	For many bounded arithmetic $T\in{\cal T}$ such as Buss's bounded arithmetics, $\tfnp(T)$ is characterized. Actually, $\tfnp(T)$ for a bounded arithmetic theory $T\in \mathcal{T}$ is a measurement of the strength of the bounded arithmetic $T$, like the provably total recursive functions for strong theories. The following theorem shows the relationship between the strength of reduction and provability.
	\begin{theorem}\label{t2.1}
		(\cite{pu18}) The following statements are equivalent:
		\begin{enumerate}
			\item There exists a problem $(p,R)\in\tfnp$ that is complete, with respect to the polynomial reductions for class $\tfnp$,
			\item There exists $T\in\mathcal{T}$ such that $\tfnp^*(T)=\tfnp$.
		\end{enumerate}
	\end{theorem}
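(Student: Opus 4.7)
The plan is to prove the two implications separately by explicit constructions.

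For the direction (1) $\Rightarrow$ (2), given a $\tfnp$-complete problem $(p,R)$, I would take $T:=\s + \forall x\exists y(|y|\leq p(|x|)\land R(x,y))$. Since $(p,R)\in\tfnp$, the added axiom is true in $\mathbb{N}$, so $T$ is consistent; its axiom set is poly-time decidable (the axioms of $\s$ together with a single explicit sentence), hence $T\in\mathcal{T}$. Choosing $(q,\phi):=(p,R)$, valid because any poly-time relation is $\Delta^b_1$ in $\s$, shows $(p,R)\in\tfnp(T)$. By $\tfnp$-completeness of $(p,R)$, every $\tfnp$ problem reduces to a member of $\tfnp(T)$, giving $\tfnp^*(T)=\tfnp$; the reverse inclusion is trivial.

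For the direction (2) $\Rightarrow$ (1), I would construct a universal problem $U\in\tfnp$ into which every member of $\tfnp(T)$, and hence every $\tfnp$ problem, reduces. Encode $U$'s input as a tuple $\langle\pi,M,q,z\rangle$ where $M$ codes a clocked poly-time machine, $q$ is a polynomial, $\pi$ is a candidate $T$-proof, and $z$ is an instance; declare $y$ a valid output iff either $|y|\leq q(|z|)\land M(z,y)=1$, or $\pi$ fails to be a valid $T$-proof of $\forall u\exists v(|v|\leq q(|u|)\land M(u,v)=1)$ and $y=0$. Both clauses are poly-time decidable because the axioms of $T$ are. Given any $P\in\tfnp$, the hypothesis supplies $Q\in\tfnp(T)$ with $P$ poly-time reducible to $Q$, and unpacking $Q\in\tfnp(T)$ provides explicit data $q_Q$, $M_Q$, and $\pi_Q$; the map $z\mapsto\langle\pi_Q,M_Q,q_Q,z\rangle$ then reduces $Q$ to $U$, and composition shows $\tfnp$-completeness of $U$.

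The main obstacle is verifying the totality of $U$. The default clause handles inputs with invalid $\pi$, but when $\pi$ \emph{is} a valid $T$-proof, the existence of an output $y$ at $z$ demands that the proved $\forall\Sigma^b_1$ sentence actually hold in $\mathbb{N}$ at $z$, i.e., $\forall\Sigma^b_1$-soundness of $T$ for poly-time relations. Mere consistency, which is the only hypothesis in the definition of $\mathcal{T}$, does not deliver this. I would address the gap either by replacing $T$ with a $\forall\Sigma^b_1$-sound $T'\in\mathcal{T}$ still witnessing $\tfnp^*(T')=\tfnp$ (for example by adjoining to $\s$ the totality axioms from a carefully chosen family of $\tfnp(T)$ problems that already covers $\tfnp$, while arranging for a poly-time axiom set), or by enlarging the default clause of $R_U$ so that totality of $U$ follows from the consistency of $T$ alone. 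This verification is the technical crux of the converse direction.
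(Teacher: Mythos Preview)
The paper does not give its own proof of this theorem; it is cited from \cite{pu18}, so there is no in-paper argument to compare against. Your two constructions are the standard ones and are essentially correct, but the obstacle you flag in the direction $(2)\Rightarrow(1)$ is not a genuine gap.

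You worry that totality of $U$ requires $\forall\Sigma^b_1$-soundness of $T$, which mere consistency does not supply. In fact it does, because $\s\subseteq T$. Suppose $T\vdash\forall x\,\exists y\bigl(|y|\le q(|x|)\land\phi(x,y)\bigr)$ with $\phi$ a $\Delta^b_1$ formula, yet this sentence fails in $\N$ at some $a$. Then $\forall y\bigl(|y|\le q(|\bar a|)\to\neg\phi(\bar a,y)\bigr)$ is a true \emph{closed bounded} sentence, hence $\Delta_0$ in the arithmetical hierarchy; $\s$ proves every such sentence (this is just $\Sigma_1$-completeness of $\s$), and therefore so does $T$. Instantiating the $T$-theorem at $\bar a$ now makes $T$ inconsistent. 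Thus consistency of $T$ together with $\s\subseteq T$ already forces every $T$-provable sentence of this shape to hold in $\N$; your $U$ is total as written, and neither of your proposed fixes is needed.

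One genuine but minor technical point does deserve attention: the solution bound $|y|\le q(|z|)$ is not polynomial in $|\langle\pi,M,q,z\rangle|$, since $q$ is part of the input and can be arbitrary. Pad the input as $\langle\pi,M,q,z,1^{q(|z|)}\rangle$; the reduction $z\mapsto\langle\pi_Q,M_Q,q_Q,z,1^{q_Q(|z|)}\rangle$ remains polynomial time because $q_Q$ is fixed for each $Q$.
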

	The main conjecture about $\tfnp$ class is that it does not have a complete problem with respect to polynomial reductions. We will show this conjecture by $\tfnp_c$. 
	\subsection{Proof systems}
	Following the definition of Cook-Reckhow, a proof system for set $C\subseteq \mathbb{N}$ is a poly time computable function $P:\mathbb{N}\to \mathbb{N}$ (the graph of $P$ is $\Delta^b_1$ in $\s$) such that $\R(P)=C$. We assume that different objects such as formulas, proofs, etc. are coded in a natural way in binary strings, hence every binary code $x$ can be shown by a natural number with binary expansion $1x$, which we will denote by $\llcorner x\lrcorner$. To code a sequence of finite binary strings $x_1$ to $x_n$ that is shown by $\left<x_1,...,x_n\right>$, we use the following coding $x^*_1x^*_2...x^*_{n-1}x_n$, for which a binary string $z$, $z^*$ is obtained from $z$ by doubling its digits and appending the string $01$ at the end of it. Note that we can use the same coding schema for coding a finite sequence of natural numbers. By this explanation, we can define proof systems for different sets, such as propositional tautologies ($\taut$) or satisfiable propositional formulas ($\sat$). By length of an object (formulas, proofs,...) with the natural number $n$ as its code, we mean $|n|$. For every object $A$, we will use the notation $\ulcorner A \urcorner$ to show the numerical code of $A$.
	
	A proof system $P$ for set $C$ is poly bounded iff there exists a polynomial $q(x)$ such that for every $n\in C$, there exists a proof $\pi\in\mathbb{N}$ such that $P(\pi)=n$ and $|\pi|\leq q(|n|)$. One of the most important conjectures in proof complexity is the nonexistence of a poly bounded proof system for $\taut$. In terms of complexity theory language, this conjecture is equivalent to $\NP\not=\Co\NP$. Another concept that is weaker than poly boundedness is optimality. The following definition formalizes the components of this concept.
	\begin{definition}
		Suppose $P$ and $Q$ are proof systems for set $C$. We say that $P$ non-uniformly p-simulates $Q$ iff there exists a polynomial $h(x)$ such that: $$\all \pi\in\mathbb{N},\all n\in C(Q(\pi)=n\to \exists \pi'\in\mathbb{N}(|\pi'|\leq h(|\pi|)\land P(\pi')=n))$$
		We say that $P$ p-simulates $Q$ iff there exists a poly time function $f$ such that:$$\all \pi\in\mathbb{N},\all n\in C(Q(\pi)=n\to P(f(\pi))=n)$$
	\end{definition} 
	Normally, non-uniform p-simulation is called simulation in the literature, but because we will generalize these concepts to bigger complexity classes, we named it in this way to make it distinguishable with generalized cases.
	
	We call a proof system $P$ for set $C$ is (non-uniform) p-optimal iff for every proof system $Q$ for set $C$, $P$ (non-uniform) p-simulates $Q$. One of the main conjectures about (non-uniform) p-optimality is that there is no (non-uniform) p-optimal proof system for $\taut$. We will show these conjectures with $\con$ and $\conN$ in which $\sf N$ stands for nonuniform. Another important conjecture about p-optimality is that there is no p-optimal proof system for $\sat$, which we call $\sat_c$. To translate these conjectures to provability and unprovability of theories in $\mathcal{T}$ we need to define some machinery. Note that for every $T\in\mathcal{T}$, because the axioms of $T$ are poly time decidable, there exists a poly time computable relation $Pr_T(x,y)$ in which it is true iff $x$ is code of a $T$-proof in the usual Hilbert style calculi of a formula in $\mathcal{L}_{BA}$ with code $y$. One of the important properties of $Pr_T(x,y)$ is the following theorem.
	\begin{theorem}(\cite{bu86})
		For every $T\in{\cal T}$, every $\Sigma^b_1$ formula $\phi(x)$, there exists a polynomial $p(x)$ such that $T\vdash\all x(\phi(x)\to\ex y(|y|\leq p(|x|)\land Pr_T(y,\ulcorner\phi(\dot{x})\urcorner))$.
	\end{theorem}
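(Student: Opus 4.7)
The plan is to prove this provable $\Sigma^b_1$-completeness statement by external induction on the logical structure of $\phi(x)$. For each such $\phi$, the goal is to exhibit a polynomial-time function $f_\phi(x)$, $\Delta^b_1$-definable in $\s$, which on any $x$ satisfying $\phi(x)$ outputs a natural number $y$ coding a Hilbert-style $T$-derivation of $\phi(\dot{x})$. Once $f_\phi$ is in place the conclusion is immediate: inside $T$ one argues that if $\phi(x)$ holds then $y := f_\phi(x)$ satisfies $|y| \leq p(|x|)$ and $Pr_T(y, \ulcorner \phi(\dot{x}) \urcorner)$, where $p$ is the polynomial output-length bound of $f_\phi$.

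The induction proceeds as follows. For an atomic formula, namely an equation or inequality between $\mathcal{L}_{BA}$-terms, the defining axioms of the basic function symbols available in $\s$ yield a term-evaluation derivation whose size, and the algorithm building it, are polynomial in $|x|$. Propositional connectives applied to sharply bounded subformulas are handled by the usual combinators (conjunction- and disjunction-introduction together with standard derived rules for negation of decidable matrices), which are poly-time and preserve polynomial length bounds. A sharply bounded quantifier $\all y < |t(x)|\,\psi(x,y)$ is discharged by iterating the inductive proof-builder for $\psi$ over all $y \in \{0, 1, \ldots, |t(x)|-1\}$ and combining the pieces; since $|t(x)|$ is polynomial in $|x|$, polynomiality is preserved. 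Finally, the leading block of bounded existentials characteristic of $\Sigma^b_1$ is handled by using poly-time search on the sharply bounded matrix to locate witnesses (which exist by the truth of $\phi(x)$), feeding these into the inductive proof-builder for the matrix, and appending the corresponding existential-introduction steps.

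The main technical obstacle is formalising everything uniformly inside $T$: one must prove in $T$, via the $\Sigma^b_1$-induction supplied by $\s$, that the recursive proof-construction terminates within the advertised polynomial bound, that its output satisfies $Pr_T$, and that numeral substitution $\dot{x}$ commutes with the syntactic coding as expected. This relies on the fact that the basic syntactic operations (substitution of numerals, concatenation of derivations, and sequence coding via the $x^*$-construction introduced in the preliminaries) are all $\Delta^b_1$-definable in $\s$, so that $f_\phi$ itself is assembled by composition of $\s$-provably total poly-time functions. With this infrastructure, the polynomial $p$ is obtained by tracking the composed bounds through each induction step, and the statement of the theorem follows from the external induction on $\phi$.
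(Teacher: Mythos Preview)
The paper does not supply its own proof of this theorem; it merely cites it from Buss's monograph. So there is no paper proof to compare against, and the task reduces to assessing whether your sketch is sound.

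Your outline is the standard one and mostly correct, but the treatment of the leading bounded existentials contains a genuine error. You write that these are ``handled by using poly-time search on the sharply bounded matrix to locate witnesses''. That cannot work: in a $\Sigma^b_1$ formula $\exists y\le t(x)\,\psi(x,y)$ the bound $t(x)$ is an $\mathcal{L}_{BA}$-term, hence in general of size exponential in $|x|$, and searching through all candidates is not a polynomial-time procedure. If such a search were available, one could extract witnesses for arbitrary $\NP$ predicates in polynomial time, i.e.\ $\P=\NP$. Consequently there is no polynomial-time function $f_\phi(x)$ of $x$ \emph{alone} with the property you state; indeed, from a short $T$-proof of $\phi(\dot x)$ one could recover a witness, so the existence of such an $f_\phi$ would again collapse $\NP$ to $\P$.

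The fix is to let the proof-building function take the witnesses as extra inputs. One proves, by the induction you describe on the sharply bounded matrix $\psi$, that there is a poly-time $g$ with $T\vdash\forall x,y\big(y\le t(x)\land\psi(x,y)\to Pr_T(g(x,y),\ulcorner\phi(\dot x)\urcorner)\big)$. Inside $T$, from the hypothesis $\phi(x)$ one unpacks the existential to obtain such a $y$, applies the displayed implication, and concludes $\exists\pi\,(|\pi|\le p(|x|)\land Pr_T(\pi,\ulcorner\phi(\dot x)\urcorner))$; the length bound $p(|x|)$ comes from the polynomial running time of $g$ together with $|y|\le |t(x)|$, which is polynomial in $|x|$. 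Your handling of atomic formulas, propositional connectives, and sharply bounded quantifiers is fine and carries over unchanged to the matrix $\psi$.
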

	Note that for every nonempty set $C\subseteq \mathbb{N}$, $C$ has a proof system iff $C$ is recursively enumerable. Suppose $C\subseteq\mathbb{N}$ is a nonempty recursively enumerable set. Let $\phi_C(x)$ be a $\Sigma_1$ formula in $\mathcal{L}_{BA}$ defining $C$. To define a proof system for $\phi_C(x)$ from a theory $T\in\mathcal{T}$, we need to define a natural number in $\mathcal{L}_{BA}$ in an efficient way. The following definition gives us an efficient way of defining the numerals.
	\begin{definition}
		
		$ $
		
		$\bar{n}=\begin{cases}
		0 & n=0\\
		SS0\cdot\bar{k}&n=2k\\
		S(SS0\cdot\bar{k})&n=2k+1
		\end{cases}$
		
		Note that the coded version of $\bar{n}$ needs $O(\log_2 n)$ bits. Additionally, the notation $\ulcorner \phi(\dot{n})\urcorner$ for formula $\phi(x)$ in $\mathcal{L}_{BA}$ is a poly time computable function such that it outputs the code of formula $\phi(\bar{n})$.
	\end{definition}
	Suppose $a$ is in $C$. Now we define the proof system $P^C_T$ associated with $T$ for $C$ as follows: 
	\begin{enumerate}
		\item Given $\pi$, if $\mathbb{N}\models Pr_T(\pi, \ulcorner \phi_C(\dot{n})\urcorner)$ for some $n$, then outputs $n$,
		\item otherwise outputs $a$.
	\end{enumerate}
	Let $Con_T(n)$ be the formula $\all x(|x|\leq n \to \neg Pr_T(x,\ulcorner\bot\urcorner
	))$. Using above notations and definitions we can express theorems that show the relationship between optimality of proof systems and provability in members of $\mathcal{T}$.
	\begin{theorem}\label{t2.2}
		(\cite{kp89})
		The following statements are equivalent:
		\begin{enumerate}
			\item There exists a nonuniform p-optimal proof system for $\taut$,
			\item There exists $T\in\mathcal{T}$ such that for every $S\in\mathcal{T}$, the shortest $T$-proofs of $Con_S(\bar{n})$ is bounded by a polynomial in $n$. 
		\end{enumerate}
	\end{theorem}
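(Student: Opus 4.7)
The plan is to prove both directions using a reflection principle for the candidate proof system together with a $\Pi^b_1$-conservation lemma that links $S$-provability of $\Pi^b_1$ sentences to short $\s$-derivations from $Con_S$.

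For the direction $(2) \Rightarrow (1)$, assume $T$ has the stated property and set $P := P^{\taut}_T$. I want to show that $P$ nonuniformly p-simulates any other proof system $Q$ for $\taut$. Given a $Q$-proof $\pi$ of a tautology $\tau$, let $S := \s + \mathrm{Sound}(Q)$, where $\mathrm{Sound}(Q) := \all \pi' \all \phi\, (Q(\pi')=\phi \to \taut(\phi))$; since $Q$ is poly time, $\mathrm{Sound}(Q)$ is a single bounded axiom and $S \in \mathcal{T}$. The equation $Q(\bar\pi)=\bar\tau$ has an $\s$-proof of size $\text{poly}(|\pi|)$, which combined with $\mathrm{Sound}(Q)$ yields an $S$-proof of $\taut(\bar\tau)$ of length $k = \text{poly}(|\pi|)$. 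I then invoke the following conservation lemma: for any $\Pi^b_1$ sentence $\phi$, if $S \vdash_k \phi$ then $\s$ proves $Con_S(\bar{k'}) \to \phi$ in length $\text{poly}(k+|\phi|)$, where $k' = \text{poly}(k+|\phi|)$. Applying this to the $\Pi^b_1$ sentence $\taut(\bar\tau)$ and combining with the short $T$-proof of $Con_S(\bar{k'})$ guaranteed by hypothesis produces a $T$-proof of $\taut(\bar\tau)$ of length polynomial in $|\pi|$, i.e., a short $P$-proof of $\tau$.

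For the direction $(1) \Rightarrow (2)$, let $P^*$ be a nonuniformly p-optimal proof system for $\taut$ and set $T^* := \s + \mathrm{Sound}(P^*)$, so $T^* \in \mathcal{T}$. Given any $S \in \mathcal{T}$, the propositional translation $\|Con_S(\bar n)\|$ is a tautology of size $\text{poly}(n)$ whose variables encode a putative $S$-refutation of length $\leq n$. The trivial proof system $Q_S$ that on input $1^n$ outputs $\|Con_S(\bar n)\|$ is poly time, so by nonuniform p-optimality of $P^*$ there exist $P^*$-proofs $\pi_n$ of $\|Con_S(\bar n)\|$ of size $\text{poly}(n)$. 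Inside $T^*$, verifying $P^*(\bar{\pi}_n) = \bar{\nu}_n$ (where $\nu_n = \ulcorner\|Con_S(\bar n)\|\urcorner$) and applying $\mathrm{Sound}(P^*)$ yields a $T^*$-proof of $\taut(\bar{\nu}_n)$ of polynomial length. To convert this into a proof of $Con_S(\bar n)$, I appeal to the provable soundness of the propositional translation: $\s$ proves $\taut(\bar{\nu}_n) \to Con_S(\bar n)$ in length $\text{poly}(n)$, uniformly in $n$, via induction on subformulas using the $\Delta^b_0$ truth-definition. Combining the two short proofs in $T^*$ gives the required bound.

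The main obstacle is establishing the two technical bridges with correct quantitative bounds. The $\Pi^b_1$-conservation lemma in $(2) \Rightarrow (1)$ amounts to formalizing in $\s$ that an $\s$-witness for $\neg\phi$ (which exists whenever a $\Pi^b_1$ sentence $\phi$ is false) can be combined with an explicit $S$-proof of $\phi$ to produce an $S$-refutation of polynomial size; this is an internal form of $\Pi^b_1$-reflection and must be verified to give bounds polynomial in both $k$ and $|\phi|$. The uniform soundness of the propositional translation used in $(1) \Rightarrow (2)$ requires a careful syntactic induction that produces $\s$-proofs of the soundness implication whose length scales polynomially in $n$ uniformly over the family $Con_S(\bar n)$. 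Both ingredients are standard in bounded arithmetic, but ensuring that every step of the simulation remains polynomially bounded is the delicate technical point.
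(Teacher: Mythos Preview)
The paper does not supply its own proof of this theorem; it is quoted as a result of Kraj\'{\i}\v{c}ek and Pudl\'ak with a citation to \cite{kp89}, so there is nothing in the paper to compare your argument against.

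That said, your outline is essentially the original Kraj\'{\i}\v{c}ek--Pudl\'ak argument and is correct in structure. For $(2)\Rightarrow(1)$ you use $S=\s+\text{soundness of }Q$ to obtain a short $S$-proof of $\staut(\bar\tau)$, then the formalized $\Sigma^b_1$-completeness of $S$ in $\s$ to derive $\Con_S(\bar{k'})\to\staut(\bar\tau)$ with polynomial-size proof, and finally modus ponens with the assumed short $T$-proof of $\Con_S(\bar{k'})$; this is exactly the standard route. For $(1)\Rightarrow(2)$ the reflection axiom plus provable correctness of the translation of $\Pi^b_1$ sentences is again the expected argument, and the two technical points you single out (polynomial control in the conservation step, and uniform polynomial-size $\s$-proofs of $\staut(\ulcorner\|Con_S(\bar n)\|\urcorner)\to Con_S(\bar n)$) are precisely the places where care is needed. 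One small gap to patch: your ``trivial proof system $Q_S$'' as described has range only $\{\|Con_S(\bar n)\|:n\in\N\}$, not all of $\taut$, so it is not a proof system for $\taut$; you must extend it (for instance by defaulting on all other inputs to the output of some fixed proof system for $\taut$) before you can invoke the optimality of $P^*$.
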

	To work with propositional tautologies and satisfiable formulas we use the poly time computable relation $\ssat(x,y)$, which means the propositional formula with code $x$ is satisfiable in assignment with code $y$. Also, we use $\Pi^b_1$ notation $\staut(x):=\all y(y\leq x\to \ssat(x,y))$ to define propositional tautologies. In order to work with $\all\Pi^b_1$ and $\all\Pi^b_1(\alpha)$ sentences as a family of propositional tautologies, we use the usual translation of $\all\Pi^b_1$ sentences, and Paris-Wilkie translation of $\all\Pi^b_1(\alpha)$ sentences as defined in \cite{bu97}.
	\begin{theorem}\label{t2.7}
		(\cite{kp89})
		The following statements are equivalent:
		\begin{enumerate}
			\item There exists a p-optimal proof system for $\taut$,
			\item There exists $T\in\mathcal{T}$ such that for every $S\in\mathcal{T}$, there exists a poly time computable function $h$ that for every $n$, $h(n)$ is a $T$-proof of $Con_S(\bar{n})$.
			\item There exists $T\in\mathcal{T}$ such that for every proof system $P$ for $\taut$, there exists a poly time formalization $P'(x,y)$ of relation $P(x)=y$ that $$T\vdash \all x,y(P'(x,y)\to\staut(y)).$$ 
		\end{enumerate}
	\end{theorem}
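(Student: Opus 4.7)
My plan is to close the cycle $(1) \Rightarrow (3) \Rightarrow (2) \Rightarrow (1)$ using a ``reflection-augmented theory'' and a ``standard'' proof system as the two bridging constructions. For $(1) \Rightarrow (3)$, I take a p-optimal $P$ and set $T := \s + \all x,y(P_0(x,y) \to \staut(y))$, where $P_0$ is the natural $\Delta^b_1$-in-$\s$ graph of $P$. For any proof system $Q$, p-optimality supplies a polynomial-time $f$ with $Q(\pi)=\tau \Rightarrow P(f(\pi))=\tau$. Then $Q'(x,y) := Q_0(x,y) \land P_0(f(x),y)$ is $\Delta^b_1$ in $\s$, defines the same relation as $Q(x)=y$ on $\N$, and satisfies $T \vdash Q'(x,y)\to\staut(y)$ via the reflection axiom of $T$ applied to the second conjunct.

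For $(3) \Rightarrow (2)$, fix $S \in \mathcal T$ and let $P_S$ be the proof system for $\taut$ given by $P_S(\pi)=\tau$ iff $\pi$ is an $S$-proof of $\staut(\bar\tau)$ (and a fixed default tautology otherwise). By (3) there is a formalization $P_S'$ with $T \vdash \all x,y(P_S'(x,y)\to\staut(y))$. I then argue inside $T$: if $\neg Con_S(\bar n)$, any $S$-refutation $\pi \le n$ can be extended in $O(1)$ steps to an $S$-proof $\pi'$ of $\staut(\bar F)$ for the fixed unsatisfiable $F := p_0\land\neg p_0$; this gives $P_S'(\pi', \ulcorner F\urcorner)$ (the canonical graph of $P_S$ and $P_S'$ being $\s$-provably equivalent), whence $\staut(\ulcorner F\urcorner)$ by reflection, contradicting the $\s$-provable $\neg\staut(\ulcorner F\urcorner)$. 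The resulting $T$-proof of $Con_S(\bar n)$ has size polynomial in $n$ by inspection.

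For $(2) \Rightarrow (1)$, define $P_T$ by $P_T(\pi)=\tau$ iff $\pi$ is a $T$-proof of $\staut(\bar\tau)$. For any proof system $Q$, let $S_Q := \s + \all x,y(Q_0(x,y)\to\staut(y))$. Given a $Q$-proof $\sigma$ of $\tau$, the evaluation $Q_0(\bar\sigma,\bar\tau)$ has a polynomial-size $\s$-proof, hence $S_Q$ proves $\staut(\bar\tau)$ in some polynomial size $p(|\sigma|)$; contrapositively, $\s$ proves $Con_{S_Q}(\overline{p(|\sigma|)}) \to \staut(\bar\tau)$ in polynomial size. Using the polynomial-time $h$ from (2) to produce a $T$-proof of $Con_{S_Q}(\overline{p(|\sigma|)})$ and appending this short derivation yields a polynomial-time procedure converting $\sigma$ into a $T$-proof of $\staut(\bar\tau)$, i.e., a $P_T$-proof of $\tau$ of polynomial length, witnessing p-simulation.

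The main obstacle I expect is the bookkeeping in $(3)\Rightarrow(2)$: the derivation from $\neg Con_S(\bar n)$ to a contradiction must proceed inside $T$ with visibly polynomial-in-$n$ size, which requires careful $\Delta^b_1$ handling of $Pr_S$, of the proof-padding map $\pi\mapsto\pi'$, and of the identification of $P_S'$ with the canonical formalization of $P_S$. A secondary concern is that in $(2)\Rightarrow(1)$ the construction must be honestly polynomial-time uniform in $\sigma$ (not merely of polynomial length), which is what the poly-time $h$ of (2) delivers provided it is correctly composed with the fixed-size contrapositive argument.
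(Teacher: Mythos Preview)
The paper does not prove this theorem; it is cited from \cite{kp89}. The closest in-paper analogue is Theorem~\ref{t3.4}, whose proof establishes $(1)\Leftrightarrow(2)$ and $(1)\Leftrightarrow(3)$ separately and in particular never argues $(3)\Rightarrow(2)$ directly.

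Your $(1)\Rightarrow(3)$ and $(2)\Rightarrow(1)$ are essentially correct and standard. The real problem is $(3)\Rightarrow(2)$, exactly at the spot you flag: the parenthetical ``the canonical graph of $P_S$ and $P_S'$ being $\s$-provably equivalent'' is not justified by hypothesis~(3). Condition~(3) only hands you \emph{some} poly-time formalization $P_S'$ that agrees with the graph of $P_S$ on $\N$ and whose soundness $T$ proves; nothing says $\s$ (or $T$) proves $P_{S,0}(x,y)\leftrightarrow P_S'(x,y)$. In your argument inside $T$, from $\neg Con_S(\bar n)$ you get a possibly nonstandard $\pi'$ with $Pr_S(\pi',\ulcorner\staut(\bar F)\urcorner)$, hence $P_{S,0}(\pi',\ulcorner F\urcorner)$ for the \emph{canonical} graph --- but you cannot pass to $P_S'(\pi',\ulcorner F\urcorner)$, which is what you need to fire the reflection axiom. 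The step fails for nonstandard $\pi'$, and the derivation does not close.

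The clean repair, matching the route taken in the proof of Theorem~\ref{t3.4}, is to drop the direct $(3)\Rightarrow(2)$ and go through $(1)$. For $(3)\Rightarrow(1)$: the theory $T$ from~(3) makes $P_T$ p-optimal, since for any $Q$ with sound formalization $Q'$ and any $Q$-proof $\sigma$ of $\tau$, the true $\Delta^b_1$ fact $Q'(\bar\sigma,\bar\tau)$ has a poly-size $\s$-proof, whence $T\vdash\staut(\bar\tau)$ in poly size via soundness --- here only \emph{standard} instances of $Q'$ are needed, so extensional agreement on $\N$ suffices. Then $(1)\Rightarrow(2)$ goes via the same reflection-augmented $T$: with $\theta_n$ the propositional translation of $Con_S(\bar n)$ and $\s\vdash Con_S(\bar n)\leftrightarrow\staut(\ulcorner\theta_n\urcorner)$ constructibly in poly time, p-simulate the proof system $n\mapsto\theta_n$ and apply reflection to get poly-time $T$-proofs of $Con_S(\bar n)$.
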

	The following theorem gives a translation of the nonexistence of the p-optimal proof system for $\sat$.
	\begin{theorem}\label{t2.8}
		(\cite{pu18})
		The following statements are equivalent:
		\begin{enumerate}
			\item There exists a p-optimal proof system for $\sat$,
			\item There exists $T\in\mathcal{T}$ such that for every proof system $P$ for $\sat$, there exists a poly time formalization $P'(x,y)$ of relation $P(x)=y$ that $$T\vdash \all x,y(P'(x,y)\to\ex z(z<y\land \ssat(y,z))).$$ 
		\end{enumerate}
	\end{theorem}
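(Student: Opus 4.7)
I would prove this by mirroring the Krajíček--Pudlák argument for Theorem~\ref{t2.7}, substituting the $\Pi^b_1$-soundness statement $\staut(y)$ with the $\Sigma^b_1$-statement $\exists z<y\,\ssat(y,z)$.

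For the direction $(1)\Rightarrow(2)$: let $P^*$ be p-optimal for $\sat$ and let $(P^*)'$ denote its natural $\Delta^b_1$-graph in $\s$. Define
\[
T \;:=\; \s \;+\; \forall x,y\bigl((P^*)'(x,y)\to\exists z<y\,\ssat(y,z)\bigr).
\]
The added axiom holds in $\mathbb{N}$ because $\R(P^*)\subseteq\sat$, so $T$ is consistent, poly-time axiomatizable, and thus in $\mathcal{T}$. For an arbitrary proof system $P$ for $\sat$, p-optimality supplies a poly-time $f$ with $P^*(f(\pi))=P(\pi)$, and then $P'(x,y):=(P^*)'(f(x),y)$ is a poly-time formalization of $P(x)=y$ for which $T\vdash\forall x,y(P'(x,y)\to\exists z<y\,\ssat(y,z))$ is an immediate consequence of the added axiom.

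For $(2)\Rightarrow(1)$: given $T\in\mathcal{T}$ witnessing the hypothesis, I would define $P^*$ as a universal poly-time machine: on input $\sigma=\langle\alpha,\pi_T,\pi,y\rangle$ (with $\alpha$ encoding a poly-time predicate $P'_\alpha$), it verifies in polynomial time that $\pi_T$ is a $T$-proof of $\forall x,y(P'_\alpha(x,y)\to\exists z<y\,\ssat(y,z))$ and that $P'_\alpha(\pi,y)$ holds; on success it outputs $y$, otherwise a fixed $y_0\in\sat$. The p-simulation is then immediate: for any proof system $Q$ for $\sat$, condition~(2) provides a once-and-for-all formalization $Q'$ and a $T$-proof $\pi_T$ of its soundness, so each $Q$-proof $\pi$ of $y$ yields a $P^*$-proof $\langle\alpha_{Q'},\pi_T,\pi,y\rangle$ of size $|\pi|+O_Q(1)$.

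The main obstacle lies in establishing $\R(P^*)\subseteq\sat$: passing the checks only forces $T\vdash\exists z<\bar{y}\,\ssat(\bar{y},z)$, and a consistent $T\in\mathcal{T}$ is not automatically $\Sigma^b_1$-sound. I would handle this by extracting from $\pi_T$, via a Herbrand/witnessing analysis, finitely many $\mathcal{L}_{BA}$-terms $t_1,\ldots,t_k$ satisfying $T\vdash\forall x,y(P'_\alpha(x,y)\to\bigvee_i\ssat(y,t_i(x,y)))$, and having $P^*$ additionally check that some $t_i(\pi,y)$ genuinely satisfies $y$ before emitting it. Since $\mathcal{L}_{BA}$-terms denote poly-time computable functions (thanks to the presence of $\#$), this additional check is poly-time and deterministic in $\sigma$, so p-simulation is preserved; executing the Herbrand step uniformly for an arbitrary $T\in\mathcal{T}$ rather than for $\s$ alone, where Buss-style witnessing is directly available, is the most delicate component of the argument.
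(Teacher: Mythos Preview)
The paper does not actually prove Theorem~\ref{t2.8}; it is quoted from \cite{pu18} without argument, so there is no paper-side proof to compare against. Your $(1)\Rightarrow(2)$ direction is correct and is the standard move. Your plan for $(2)\Rightarrow(1)$ is also the right construction, but your final paragraph misdiagnoses the situation in both directions.

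The ``main obstacle'' you flag is not an obstacle. A consistent $T\supseteq\s$ \emph{is} automatically sound for closed $\Sigma^b_1$ sentences: every closed bounded sentence is decided already by $\s$, since all quantifier bounds are closed $\mathcal{L}_{BA}$-terms and hence evaluate to standard numerals, so the sentence unwinds to a Boolean combination of numerical equalities that $\s$ settles correctly. Thus a false closed $\Sigma^b_1$ sentence is refuted in $\s$ and cannot be proved in any consistent extension. In your construction, once the checks succeed you have $T\vdash\forall x,y(P'_\alpha(x,y)\to\exists z<y\,\ssat(y,z))$ together with $\mathbb{N}\models P'_\alpha(\bar\pi,\bar y)$ for a $\Delta^b_1$ formalization $P'_\alpha$; $\Sigma^b_1$-completeness then gives $T\vdash P'_\alpha(\bar\pi,\bar y)$, hence $T\vdash\exists z<\bar y\,\ssat(\bar y,z)$, and by the observation just made this closed $\Sigma^b_1$ sentence is true in $\mathbb{N}$, i.e.\ $y\in\sat$. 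No witnessing step is needed, and $\R(P^*)\subseteq\sat$ follows immediately.

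Conversely, the Herbrand/witnessing fix you sketch would \emph{not} work. Buss-style witnessing is specific to $\s$ (and its bounded relatives); an arbitrary $T\in\mathcal{T}$ may be as strong as $\PA$ or set theory, and a $T$-proof of a $\forall\Sigma^b_1$ sentence need not yield $\mathcal{L}_{BA}$-terms for the existential witness---the provably total functions of $T$ can vastly exceed polynomial time. So the step you call ``the most delicate component'' is in fact unavailable in general, while the elementary soundness observation above closes the argument with no delicacy at all.
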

	\subsection{Disjoint $\NP$ pairs, disjoint $\Co\NP$ pairs}
	The concept of disjoint $\NP$ pairs and disjoint $\Co\NP$ pairs are discussed in \cite{pu18} to define stronger conjectures than $\tfnp_c$ and $\conN$. A pair of $(\Co)\NP$ languages $(U,V)$ is a disjoint $(\Co)\NP$ pair iff $U\cap V=\varnothing$. We will show this class of pairs by $\mathsf{Disj}(\Co)\NP$. In order to compare the complexity of disjoint $(\Co)\NP$ pairs, the reductions are defined as follows:
	\begin{definition}
		Suppose $(U_0,U_1)$ and $(U'_0,U'_1)$ are disjoint $(\Co)\NP$ pairs. We say $(U_0,U_1)$ is polynomial reducible to $(U'_0,U'_1)$ iff there exists a poly time computable function $f$ such that for $i\in\{0,1\}$:
		$$\all n \in\mathbb{N}(n\in U_i\to f(n)\in U'_i)$$
	\end{definition}
	Again, another way to compare the complexity of disjoint $(\Co)\NP$ pairs is by measuring how strong axioms are needed to prove such a pair is disjoint. The next definition formalizes this notion.
	\begin{definition}
		Suppose $T$ is in $\mathcal{T}$. We say $(\Co)\NP$ pair $(U_0,U_1)$ is provably disjoint in $T$ or $(U_0,U_1)\in \mathsf{Disj}(\Co)\NP(T)$ iff there exists a ($\Pi^b_1$) $\Sigma^b_1$ pair $(\phi_0,\phi_1)$ such that:
		\begin{enumerate}
			\item $\mathbb{N}\models\all x(x\in U_i \equiv \phi_i(x)),i\in\{0,1\}$,
			\item $T\vdash \all x(\neg\phi_0(x)\lor \neg\phi_1(x))$.
		\end{enumerate}
	\end{definition}
	Like theorem \ref{t2.1}, the following theorem shows the relationship between the strength of reduction and provability.
	\begin{theorem}\label{t2.5}
		(\cite{pu18}) The following statements are equivalent:
		\begin{enumerate}
			\item There exists a pair $(U,V)\in\disj(\Co)\NP$ that is complete with respect to the polynomial reductions for class $\disj(\Co)\NP$,
			\item There exists $T\in\mathcal{T}$ such that $\disj(\Co)\NP(T)=\disj(\Co)\NP$.
		\end{enumerate}
	\end{theorem}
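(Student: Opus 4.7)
The plan is to adapt Pudl\'ak's proof of Theorem~\ref{t2.1}, with pairs of $\Sigma^b_1$ (resp.\ $\Pi^b_1$) formulas taking the role that the totality witness plays there.

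For $(1)\Rightarrow(2)$: pick a complete pair $(U,V)$ with $\Sigma^b_1$ (resp.\ $\Pi^b_1$) defining formulas $\phi_U,\phi_V$ and put
$$T:=\s+\forall x(\neg\phi_U(x)\lor\neg\phi_V(x)).$$
The added axiom is true in $\N$, so $T$ is consistent; its axiom set is poly-time decidable, so $T\in\mathcal{T}$. Given any $(U',V')\in\disj(\Co)\NP$ with defining formulas $\phi_{U'},\phi_{V'}$ and polynomial-time reduction $f$ to $(U,V)$, I would witness membership in $\disj(\Co)\NP(T)$ through the ``twisted'' formulas
$$\phi'_{U'}(x):=\phi_{U'}(x)\land\phi_U(f(x)),\qquad\phi'_{V'}(x):=\phi_{V'}(x)\land\phi_V(f(x)),$$
which remain in the correct complexity class (since $f$ is $\Delta^b_1$ in $\s$), still define $U',V'$ exactly in $\N$ by the reduction property of $f$, and whose disjointness $T$ derives by a single instantiation of its axiom at the term $f(x)$.

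For $(2)\Rightarrow(1)$: fix $T\in\mathcal{T}$ with $\disj(\Co)\NP(T)=\disj(\Co)\NP$ together with a poly-time enumeration $(\phi_{e,0},\phi_{e,1})_{e\in\N}$ of all pairs of $\Sigma^b_1$ (resp.\ $\Pi^b_1$) formulas in one free variable, and build the universal pair
\begin{align*}
U^* &:=\{\langle\pi,e,x\rangle:Pr_T(\pi,\ulcorner\forall z(\neg\phi_{e,0}(z)\lor\neg\phi_{e,1}(z))\urcorner)\land\phi_{e,0}(x)\},\\
V^* &:=\{\langle\pi,e,x\rangle:Pr_T(\pi,\ulcorner\forall z(\neg\phi_{e,0}(z)\lor\neg\phi_{e,1}(z))\urcorner)\land\phi_{e,1}(x)\}.
\end{align*}
Both lie in $\NP$ (resp.\ $\Co\NP$) since $Pr_T$ is poly-time. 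Completeness is immediate: given $(U',V')\in\disj(\Co)\NP(T)$ witnessed by $(\phi_0,\phi_1)$ and a $T$-proof $\pi$ of their disjointness, the map $x\mapsto\langle\pi,e,x\rangle$ (with $e$ the code of $(\phi_0,\phi_1)$) is a poly-time reduction into $(U^*,V^*)$.

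The delicate step, and what I expect to be the main obstacle, is verifying that $(U^*,V^*)$ is actually disjoint, since any common element would encode a $T$-proof of a false $\forall z\,\Pi^b_1$ (resp.\ $\forall z\,\Sigma^b_1$) disjointness sentence. I would handle this from consistency of $T$ alone, using the standard fact that any consistent extension of $\s$ is sound for such sentences: in any model of $T$ standard numerals evaluate $\mathcal{L}_{BA}$-terms to standard values, so a concrete $\N$-counterexample would yield, via $\Delta^b_0$-completeness of $\s$ on specific numerals, an $\s$-provable contradiction with the hypothesized $T$-proof. This soundness step is the main place where Theorem~\ref{t2.1}'s argument needs adaptation; the rest is routine bookkeeping.
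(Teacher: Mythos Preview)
The paper itself does not prove this theorem; it is simply quoted from \cite{pu18}, so there is no in-paper argument to compare against. Your proposal is correct and is precisely the standard argument (and the one \cite{pu18} gives): the twisted formulas in $(1)\Rightarrow(2)$, the universal pair $(U^*,V^*)$ in $(2)\Rightarrow(1)$, and your handling of the disjointness of $(U^*,V^*)$ via completeness of $\s$ for true closed bounded sentences all work as stated, in both the $\NP$ and the $\Co\NP$ case.
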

	The main conjecture about disjoint $(\Co)\NP$ pairs is that it does not have a complete problem with respect to polynomial reductions. We will show this conjecture by $\disj(\Co)\NP_c$.
	\subsection{A finite reflection principle}
	A finite reflection principle for $\Sigma^b_1$ formulas is defined in \cite{pu18} to propose a conjecture that connects defined conjectures in this section. To define the conjecture, we need the following theorem.
	\begin{theorem}(\cite{ph93})
		For every $i\geq 1$ there exists a $\Sigma^b_i$ formula $\mu_i$ such that for every $\Sigma^b_i$ formula $\phi(x)$ there exists natural number $e$ and polynomial $p$ such that:
		$$\s\vdash \all x,y(|y|\geq p(|x|)\to(\mu_i(\bar{e},x,y)\equiv \phi(x)))$$
	\end{theorem}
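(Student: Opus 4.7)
The plan is to take $\mu_i$ to be a partial truth predicate for $\Sigma^b_i$ formulas under a fixed G\"odel numbering, so that $\mu_i(\bar{e},x,y)$ asserts ``the formula coded by $e$, evaluated on input $x$ using $y$ as a witness/evaluation tape, comes out true.'' Here $e$ codes $\phi$, and $y$ supplies the witnesses for all existentially quantified subformulas together with tableaux for every sharply bounded quantifier block; the extra size condition $|y| \geq p(|x|)$ guarantees that $y$ is long enough to hold all this data.

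First I would fix a coding of $\mathcal{L}_{BA}$-formulas so that the syntax tree of a formula of code $e$ can be navigated in polynomial time, and define an evaluation relation $\mathrm{Eval}(e,x,y)$ that walks this tree: for each existential subformula it reads a witness off $y$ at a position computable from the subformula's address; conjunctions, disjunctions, sharply bounded quantifiers, and atomic tests on terms (whose values lie in $2^{|x|^{O(1)}}$) are handled inside the $\Delta^b_1$ kernel. By packing all existential choices across $\phi$ into the single outer $\exists y$ and all universal branches into a single outer $\forall$, one can arrange $\mathrm{Eval}$ to be a $\Sigma^b_i$ formula uniformly in $e$; this is the standard correspondence between $\Sigma^b_i$ and $\Sigma^p_i$ levels of the polynomial hierarchy that $\s$ proves via Buss's witnessing theorem. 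Setting $\mu_i := \mathrm{Eval}$ (with a default behaviour when $e$ is not a legal code) then gives the required universal formula.

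For a specific $\phi(x) \in \Sigma^b_i$ with code $e$, the polynomial $p$ is read off from the term bounds occurring in $\phi$: it must dominate the total bit length of all witnesses, tableaux, and intermediate term values needed when the free variable has length $|x|$. The equivalence $\mu_i(\bar{e},x,y) \equiv \phi(x)$ whenever $|y| \geq p(|x|)$ is then verified inside $\s$ by an external induction on the build-up of $\phi$: each inductive step matches a syntactic clause against the corresponding clause of $\mathrm{Eval}$, and the efficient numeral coding $\bar{e}$ keeps the proof length manageable.

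The hard part is exactly the uniformity requirement hidden in step two: one has to arrange the navigation and witness packing so that $\mathrm{Eval}$ is \emph{uniformly} $\Sigma^b_i$ rather than acquiring an extra alternation for every nesting level of $\phi$. This is resolved by flat, address-based addressing of all existential and universal choices at the top of the evaluation formula (so that nesting in $\phi$ translates into tree-navigation inside the $\Delta^b_1$ matrix, not into new alternations), together with the standard sharply bounded routines for manipulating codes of syntax trees. Once this packing is in place, the rest is routine formalizable bookkeeping in $\s$.
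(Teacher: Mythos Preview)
The paper does not prove this theorem: it is stated with the citation \cite{ph93} and no proof is given in the text. So there is nothing to compare your argument against here; the authors are simply importing the existence of universal $\Sigma^b_i$ formulas from H\'ajek--Pudl\'ak as a black box.

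That said, your sketch is in line with the standard construction found in that reference. The universal formula is indeed built as a partial satisfaction predicate for $\Sigma^b_i$ formulas, with the extra argument $y$ serving as a size parameter large enough to bound all the terms and witnesses arising from $\phi$ on input $x$; the equivalence for each fixed $\phi$ is then established in $\s$ by an external (metatheoretic) induction on the structure of $\phi$. Your identification of the delicate point---keeping $\mathrm{Eval}$ genuinely $\Sigma^b_i$ rather than picking up an alternation per nesting level, by flattening all the existential/universal choices to the top and handling the syntax-tree navigation inside a $\Delta^b_1$ matrix---is exactly the issue that the textbook construction has to address. One small remark: in the usual presentation the parameter $y$ is used purely as a size bound (so that all quantifier bounds in $\phi$ become bounded by a term in $y$), rather than as a tape literally storing witnesses; your ``evaluation tape'' picture is workable but slightly nonstandard, and you should be careful that reading witnesses off $y$ at computed addresses does not itself cost an extra quantifier alternation.
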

	The finite reflection principle is defined as follows:
	\begin{definition}
		For every $T\in\mathcal{T}$, $n\in\mathbb{N}$, the $\Sigma^b_1\mathsf{RFN}_T(\bar{n})$ is defined by
		$$\all e,u,x,z(|e|,|u|,|x|,|z|\leq \bar{n}\land Pr_T(u,\ulcorner\mu_1(\dot{e},\dot{x},\dot{z})\urcorner)\to \mu_1(e,x,z)).$$
	\end{definition}
	The following conjectures are defined in \cite{pu18}:
	\begin{enumerate}
		\item $\rfnN$: For every $T\in\mathcal{T}$, there exists $S\in\mathcal{T}$ such that the $T$-proofs of $\Sigma^b_1\rf_S(\bar{n})$ are not polynomially bounded in $n$.
		\item $\rfn$: For every $T\in\mathcal{T}$, there exists $S\in\mathcal{T}$ such that the $T$-proofs of $ \Sigma^b_1\rf_S(\bar{n})$ can not be constructed in polynomial time.
	\end{enumerate}
	The following figure shows the relation between conjectures of this section. For more information about the proof of these relations see \cite{pu18}.
	$$\xymatrix@C=5mm{            
		{\disj\NP_c}\ar[d]&&{\disconp_c}\ar[d]\\
		{\conN}\ar[d]\ar[rd]&&{\tfnp_c}\ar[d]\\
		{\rfnN} \ar[rd]\ar[d]&{\con}\ar[d] & {\sat_c}\ar[ld]\\
		{\NP\not=\Co\NP}\ar[rd]&{\rfn}\ar[d]\\
		&{{\sf P}\not=\NP}
	}$$
	\begin{figure}[h!]
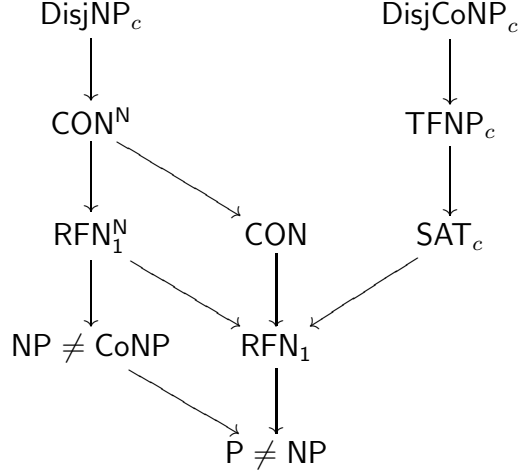

		\caption{Relations between conjectures}
		\label{fig}
	\end{figure}
	\section{Incompleteness in the finite domain}
	\subsection{Some observations on $\tfnp$ class}
	As we see in the previous section, the logical equivalent conjectures that are discussed are of the following form:
	
	{\em For every $T\in\mathcal{T}$ there exists some sentence $\phi$ that does not have $T$-proof with some properties.}
	
	The above form works for all of the conjectures that we discussed, except for $\tfnp_c$. The logical form of $\tfnp_c$ conjecture uses $\tfnp^*(T)$ instead of $\tfnp(T)$. Here we want to investigate what happens if we use $\tfnp(T)$. This new conjecture, which we call $\tfnp^w_c$ is weaker than $\tfnp_c$. The next proposition shows that it is stronger than $\sat_c$.
	\begin{proposition}
		If for every $T\in\mathcal{T}$ we have $\tfnp(T)\not=\tfnp$, then there is no p-optimal proof system for $\sat$.
	\end{proposition}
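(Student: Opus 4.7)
The plan is to prove the contrapositive: assuming there is a p-optimal proof system for $\sat$, we exhibit $T \in \mathcal{T}$ with $\tfnp(T) = \tfnp$. By Theorem~\ref{t2.8}, fix $T \in \mathcal{T}$ such that for every proof system $Q$ for $\sat$ there is a poly time formalization $Q'$ of $Q$ with $T \vdash \forall x, y\,(Q'(x,y) \to \exists z(z < y \land \ssat(y, z)))$. Let $(p, R) \in \tfnp$ be arbitrary; the idea is to package ``totality of $R$'' into a proof system for $\sat$ and apply the assumed soundness.

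For each $x$, let $\phi_x$ be the standard propositional translation of the poly time relation $R(x, \cdot)$ over bit-strings of length $\le p(|x|)$, with Boolean variables encoding the bits of a potential witness $y$. Then $\N \models$ ``$\phi_x$ is satisfiable'' iff some such $y$ exists, and by hypothesis every $\phi_x$ is satisfiable. Define a proof system $P$ for $\sat$ with two kinds of proofs: a ``trivial proof'' $\langle 0, x \rangle$ on which $P$ outputs $\ulcorner \phi_x \urcorner$, and a ``canonical proof'' $\langle 1, \psi, \alpha \rangle$ (with $\alpha$ a satisfying assignment for $\psi$) on which $P$ outputs $\ulcorner \psi \urcorner$; any other input is mapped to a fixed satisfiable formula. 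The trivial branch is sound because each $\phi_x$ is satisfiable, the canonical branch forces $\R(P) = \sat$, and $P$ is clearly poly time computable.

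Apply Theorem~\ref{t2.8} to this $P$, obtaining a formalization $P'$ with $T \vdash \forall u, v\,(P'(u, v) \to \exists z(z < v \land \ssat(v, z)))$. Writing $f(x)$ for the $\Delta^b_1$-in-$\s$ term computing $\ulcorner \phi_x \urcorner$, the fact that $P'$ captures $P$ on the trivial branch is provable in $\s$, so $T \vdash \forall x\, P'(\langle 0, x\rangle, f(x))$. Substituting into the soundness statement gives $T \vdash \forall x\, \exists z(z < f(x) \land \ssat(f(x), z))$, i.e.\ $T$ proves that $\phi_x$ is satisfiable for all $x$. Since the propositional-translation equivalence ``$\phi_x$ satisfiable $\leftrightarrow \exists y(|y| \le p(|x|) \land R(x,y))$'' is a standard fact provable in $\s$, we obtain $T \vdash \forall x\, \exists y(|y| \le p(|x|) \land R(x,y))$. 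Taking $q = p$ and $\phi = R$ in the definition of $\tfnp(T)$, this places $(p, R) \in \tfnp(T)$, completing the contrapositive.

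The main obstacle is formalization bookkeeping: one has to verify that $\s$ proves, uniformly in $x$, both that the formalization $P'$ supplied by Theorem~\ref{t2.8} correctly evaluates $P$ on inputs of the syntactic form $\langle 0, x\rangle$, and that the propositional translation $\phi_x$ faithfully encodes the poly time relation $R(x, \cdot)$. Both are standard consequences of $\s$'s ability to reason about its $\Delta^b_1$-definable functions, but they require working with natural $\Delta^b_1$ formalizations rather than pathological ones; the rest of the argument is a direct reduction of totality of $R$ to satisfiability of its propositional translation.
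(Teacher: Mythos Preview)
Your overall strategy---build a proof system for $\sat$ whose ``trivial'' branch outputs the propositional translation of $R(x,\cdot)$, then use soundness to extract totality---is exactly the paper's idea. The difference is that the paper works directly with a p-optimal system $P$, sets $T:=\s+\forall x\exists y\,\ssat(P(x),y)$, and exploits the freedom in the definition of $\tfnp(T)$ by taking the \emph{new} formalization $\ssat(P(h(2x+1)),f(y))$, where $h$ is the simulation of your auxiliary proof system by $P$. Totality of this formalization in $T$ is then immediate from the axiom.

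Your version routes through Theorem~\ref{t2.8} as a black box and then tries to recover the \emph{original} formalization $R$. The weak point is the sentence ``the fact that $P'$ captures $P$ on the trivial branch is provable in $\s$, so $T\vdash\forall x\,P'(\langle 0,x\rangle,f(x))$.'' Theorem~\ref{t2.8} only promises \emph{some} formalization $P'$ with $T$-provable soundness; in the proof of that theorem, $P'$ is built as the natural formalization of $O\circ h$ for the p-optimal $O$ and the simulation $h$, not as the natural formalization of your $P$. The statement $\forall x\,(O(h(\langle 0,x\rangle))=f(x))$ is a true $\forall\Delta^b_1$ sentence, but there is no reason $\s$ proves it---$\s$ does not know that $h$ is a correct simulation. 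Your closing remark that one must ``work with natural $\Delta^b_1$ formalizations'' is precisely the point: Theorem~\ref{t2.8} does not hand you one.

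The fix is minor and brings you back to the paper's argument: let $g$ be the poly-time function with graph $P'$ (so $\s\vdash\forall x\,P'(\langle 0,x\rangle,g(\langle 0,x\rangle))$ by functionality), and take the formalization of $(p,R)$ to be $\psi(x,y):=\ssat(g(\langle 0,x\rangle),f(y))$. In $\N$ this agrees with $R$ since $g(\langle 0,x\rangle)=\ulcorner\phi_x\urcorner$, and $T$ proves its totality directly from soundness of $P'$. This is exactly the paper's $\ssat(P(h(2x+1)),f(y))$ once you unwind Theorem~\ref{t2.8}.
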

	\begin{proof}
		Suppose $P$ is a p-optimal proof system for $\sat$. Define $T:=\s+\all x \ex y \ssat(P(x),y)$. Let $(p,R)$ be a $\tfnp$ problem and $(q,\phi)$ be one of its formalizations. Suppose $F$ is a proof system for $\sat$. Let $\theta_n$ be the usual propositional translation polynomial time relation $|y|\leq q(|\bar{n}|)\land \phi(\bar{n},y)$. The proof system $P_\phi$ for $\sat$ is defined as follows:
		$$P_\phi(x)=\begin{cases}F(n) & x=2n\\\theta_n& x =2n+1\end{cases}$$
		Because $P$ is a p-optimal proof system, there exists a poly time function $h$ such that $\N\models \all x(P(h(x))=P_{\phi}(x))$. This implies that $$\N\models\all x,y\big((|y|\leq q(|x|)\land\phi(x,y)) \equiv \ssat(P(h(2x+1)),f(y))\big)$$ for some poly time function $f$,  hence $\ssat(P(h(2x+1)),f(y))$ is another formalization of $(p,R)$. Note that by definition of $T$ we have $T\vdash \all x\ex y \ssat(P(h(2x+1)),f(y))$ which means $(p,R)\in\tfnp(T)$.
	\end{proof}
	
	We can not prove that $\tfnp^w_c$ implies $\tfnp_c$, but one way to show that the latter conjecture is probably stronger is to find a $T\in\mathcal{T}$ such that $\tfnp(T)\not=\tfnp^*(T)$. It is conjectured that such a $T$ exists, but we observed that existence of such a $T$ implies $\tfnp\not=\FP$, hence proving this conjecture unconditionally is hard. We need the following lemma to prove the previous implication.
	\begin{lemma}\label{l2.3}
		$\tfnp(\s)={\sf FP}$.
	\end{lemma}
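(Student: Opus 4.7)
My plan is to prove the two inclusions $\tfnp(\s) \subseteq \FP$ and $\FP \subseteq \tfnp(\s)$ separately, interpreting $\FP$ here as the class of TFNP problems admitting a polynomial-time witness function.

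For $\tfnp(\s) \subseteq \FP$, I would appeal to Buss's witnessing theorem for $\s$ (from \cite{bu86}). Given $(p,R) \in \tfnp(\s)$ with a reformulation $(q,\phi)$, the formula $\phi$ is $\Delta^b_1$ in $\s$ and hence has a $\Sigma^b_1$ representative, so the hypothesis $\s \vdash \forall x\exists y (|y| \le q(|x|) \land \phi(x,y))$ produces a polynomial-time function $f$ with $\phi(x,f(x)) \land |f(x)| \le q(|x|)$ in $\N$. The equivalence in condition (3) of the definition of $\tfnp(\s)$ then transfers this to $R(x,f(x)) \land |f(x)| \le p(|x|)$, exhibiting $f$ as a polynomial-time solver of the search problem.

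For the reverse inclusion $\FP \subseteq \tfnp(\s)$, suppose $(p,R)$ is solved in polynomial time by $f$. The naive choice $\phi := R$ with $q := p$ would require $\s \vdash \forall x\,R(x,f(x))$, a $\Pi^b_1$ correctness statement that $\s$ has no general means to verify for an arbitrary polynomial-time algorithm. My workaround is to enlarge the relation by taking $\phi(x,y) := R(x,y) \lor y = g(x)$, where $g(x)$ is a truncation of $f$ to $p(|x|)$ bits, so that $\s \vdash |g(x)| \le p(|x|)$. This $\phi$ is $\Delta^b_1$ in $\s$ because it is polynomial-time decidable, and it agrees with $R$ on the bounded domain in $\N$ because $g = f$ there (since $f$ already respects the size bound) and $R(x,f(x))$ is true. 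Condition (4) then becomes trivial: $\s$ proves totality of the polynomial-time function $g$ within the size bound, and $y = g(x)$ witnesses the second disjunct of $\phi$.

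The main obstacle, as indicated, is the second inclusion: $\s$ cannot in general verify the correctness of an externally supplied witness function. The disjunctive trick $\phi := R \lor (y = g)$ sidesteps this by creating an enlarged relation that trivially admits $g$ as a witness in $\s$ while preserving the bounded behaviour of $R$ in $\N$. Once this idea is in place, the truncation to handle size bounds and the verification of $\Delta^b_1$-ness and of the equivalence in condition (3) are routine.
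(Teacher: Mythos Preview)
Your proposal is correct and follows essentially the same route as the paper: Buss's witnessing theorem for $\tfnp(\s)\subseteq\FP$, and the disjunctive trick $\psi\lor(\text{graph of }f)$ for $\FP\subseteq\tfnp(\s)$. Your explicit truncation $g$ to enforce $\s\vdash |g(x)|\le p(|x|)$ is in fact a careful handling of a point the paper leaves implicit (it tacitly assumes the bound $q$ in its chosen formalization $(q,\psi)$ dominates the growth of $f$).
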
 
	\begin{proof}
		By the fact that $\Sigma^b_1$ definable functions of $\s$ is poly time computable we get $\tfnp(\s)\subseteq \FP$, so it is sufficient to prove $\FP\subseteq \tfnp(\s)$. Let $(p,R)$ be a $\tfnp$ problem which can be solved by the poly time function $f$. Let $\phi$ be the $\Delta^b_1$ formalization of $f$ in $\s$. Additionally, let $(q,\psi)$ be a formalization of $(p,R)$. Note that $(q, \psi \lor \phi)$ is a formalization of $(p,R)$ and also $\s\vdash \all x\ex y(|y|\leq q(|x|)\land (\psi(x,y)\lor \phi(x,y))$, hence $(p,R)\in\tfnp(\s)$, which implies $\FP\subseteq \tfnp(\s)$.
	\end{proof}
	\begin{corollary}
		If there exists $T\in\mathcal{T}$ such that $\tfnp(T)\not=\tfnp^*(T)$, then $\tfnp\not=\FP$.
	\end{corollary}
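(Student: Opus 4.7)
The plan is to argue by contraposition: assume $\tfnp = \FP$, and show that this forces $\tfnp(T) = \tfnp^*(T)$ for every $T \in \mathcal{T}$, contradicting the hypothesis. The inclusion $\tfnp(T) \subseteq \tfnp^*(T)$ is immediate from the definitions, since every problem is reducible to itself in polynomial time. So the real task is to derive $\tfnp^*(T) \subseteq \tfnp(T)$ from the assumption $\tfnp = \FP$.

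For that direction, I would pick an arbitrary $(p,R) \in \tfnp^*(T)$. Since $\tfnp^*(T) \subseteq \tfnp$ and $\tfnp = \FP$ by assumption, the search problem $(p,R)$ is solvable by some polynomial time function. I would then invoke Lemma~\ref{l2.3}, which gives $\tfnp(\s) = \FP$, to conclude that $(p,R) \in \tfnp(\s)$. The last ingredient is the monotonicity observation that if $S \subseteq T$ then $\tfnp(S) \subseteq \tfnp(T)$: this follows directly from the definition of $\tfnp(\cdot)$, because a $\Delta^b_1$ formalization $(q,\phi)$ witnessing totality in $S$ also witnesses totality in $T$ (anything $S$-provable is $T$-provable). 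Since $\s \subseteq T$ for every $T \in \mathcal{T}$, we get $(p,R) \in \tfnp(T)$, as required.

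Putting these pieces together yields $\tfnp^*(T) \subseteq \tfnp(T)$, hence equality, for every $T \in \mathcal{T}$ under the assumption $\tfnp = \FP$. By contrapositive, if some $T$ witnesses $\tfnp(T) \neq \tfnp^*(T)$, then $\tfnp \neq \FP$. There is no serious obstacle here; the main conceptual point is simply that the chain $\tfnp^*(T) \subseteq \tfnp = \FP = \tfnp(\s) \subseteq \tfnp(T)$ collapses under $\tfnp = \FP$, and Lemma~\ref{l2.3} is doing all the nontrivial work.
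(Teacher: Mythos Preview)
Your proposal is correct and follows essentially the same contrapositive argument as the paper: assume $\tfnp=\FP$, then use Lemma~\ref{l2.3} together with $\s\subseteq T$ to get $\FP=\tfnp(\s)\subseteq\tfnp(T)$, and combine with $\tfnp^*(T)\subseteq\FP$ to collapse everything. The only cosmetic difference is that you obtain $\tfnp^*(T)\subseteq\FP$ via the trivial inclusion $\tfnp^*(T)\subseteq\tfnp=\FP$, whereas the paper routes it through $\tfnp(T)\subseteq\FP$ and hence $\tfnp^*(T)\subseteq\FP^{\FP}=\FP$; both are fine.
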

	\begin{proof}
		Suppose $\tfnp$ is equal to $\FP$, hence for every $T\in\mathcal{T}$, $\tfnp(T)\subseteq \FP$, which implies $\tfnp^*(T)\subseteq \FP^\FP=\FP$. Also, by definition of $T$ and lemma \ref{l2.3}, $\FP=\tfnp(\s)\subseteq \tfnp(T)$, hence $\tfnp(T)=\tfnp^*(T)=\FP$, which completes the proof.
	\end{proof}
	\subsection{On proof systems and $\rfn$ conjecture}
	As we have noted, every conjecture that is discussed in the previous section has two formalizations, one in terms of proof complexity notations, and one in terms of incompleteness in the finite domain notations, except $\rfn$ and $\rfnN$. Here we want to show that these conjectures have equivalent forms in terms of optimal proof systems for $\Sigma^q_1$-$\taut$. $\Sigma^q_i$ ($\Pi^q_i$) propositional formulas are quantified propositional formulas and defined like the hierarchy of bounded formulas in $\mathcal{L}_{BA}$. The next theorem is similar to theorems \ref{t2.7} and \ref{t2.8} for $\con$ and $\conN$. 
	\begin{theorem}\label{t3.4}
		$ $
		
		\begin{enumerate}
			\item 
			The following statements are equivalent:
			\begin{enumerate}
				\item For every $T\in\mathcal{T}$, there exists $S\in\mathcal{T}$ such that the $T$-proofs of $\Sigma^b_1\rf_S(\bar{n})$ are not polynomially bounded in $n$.
				\item $\Sigma^q_1$-$\taut$ does not have a nonuniform p-optimal proof system.
			\end{enumerate}
			\item 
			The following statements are equivalent:
			\begin{enumerate}
				\item For every $T\in\mathcal{T}$, there exists $S\in\mathcal{T}$ such that the $T$-proofs of $ \Sigma^b_1\rf_S(\bar{n})$ can not be constructed in polynomial time.
				\item $\Sigma^q_1$-$\taut$ does not have a p-optimal proof system.
				\item For every theory $T\in\mathcal{T}$, there exists a proof system $P$ for $\Sigma^q_1$-$\taut$ such that $T$ does not prove the soundness of any formalization of $P$.
			\end{enumerate}
		\end{enumerate}
	\end{theorem}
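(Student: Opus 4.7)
The plan is to adapt the Krajíček--Pudlák arguments underlying Theorems \ref{t2.2} and \ref{t2.7}, replacing the consistency statement $\Con_S$ by the $\Sigma^b_1$ local reflection principle $\Sigma^b_1\rf_S$ and ordinary tautologies by $\Sigma^q_1$-tautologies. The central correspondence to exploit is the Paris--Wilkie translation between bounded $\forall\Sigma^b_1$ sentences and $\Sigma^q_1$-tautologies: a sentence $\forall x,z\,(|x|,|z|\leq n\to\mu_1(\bar e,x,z))$ is true iff its propositional translation $\phi^{e}_n$, of size polynomial in $n$, is a $\Sigma^q_1$-tautology, and this translation is $\s$-verifiable. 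In particular, the translations $\phi^S_n$ of $\Sigma^b_1\rf_S(\bar n)$ form an $\s$-definable family of $\Sigma^q_1$-tautologies, and polynomial-size (respectively polynomial-time constructible) $T$-proofs of the arithmetic sentences correspond, up to a polynomial overhead inside $\s$, to proofs of the propositional translations in the natural proof system attached to $T$.

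For the hard direction of item 1, I assume a nonuniform p-optimal proof system $P$ for $\Sigma^q_1$-$\taut$ and take $T_0 := \s + \sigma_P$, where $\sigma_P$ is the $\forall\Sigma^b_1$ axiom formalizing ``for every $\pi$ and every assignment $\vec a$ to the free variables of $P(\pi)$, there exists a witness $\vec w$ making the quantifier-free matrix of $P(\pi)$ hold at $(\vec w,\vec a)$''. For each $S\in\mathcal T$, I fix a proof system $R_S$ for $\Sigma^q_1$-$\taut$ that assigns trivial constant-length proofs to the tautologies $\phi^S_n$ and behaves as a fixed background system otherwise. By nonuniform p-optimality of $P$, there are $P$-proofs of $\phi^S_n$ of size polynomial in $n$, whose existence is a $\Sigma^b_1$-statement provable in $\s$ by a polynomial-size witness; instantiating $\sigma_P$ then yields inside $T_0$ that $\phi^S_n$ is a tautology, and undoing the Paris--Wilkie translation inside $\s$ gives a polynomial-size $T_0$-proof of $\Sigma^b_1\rf_S(\bar n)$. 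For the converse, I assume $T_0\in\mathcal T$ has polynomial-size proofs of $\Sigma^b_1\rf_S(\bar n)$ for every $S\in\mathcal T$ and define $P_{T_0}$ so that a $P_{T_0}$-proof of $\phi$ is a $T_0$-proof of the natural $\forall\Sigma^b_1$ formalization that $\phi$ is a $\Sigma^q_1$-tautology. Given any proof system $Q$ and a $Q$-proof $\pi$ of $\phi$, form $S_Q := \s + \sigma_Q$ where $\sigma_Q$ is the $\forall\Sigma^b_1$ soundness axiom of $Q$; by $\Sigma^b_1$-completeness, $S_Q$ has a size-$\mathrm{poly}(|\pi|)$ proof that $\phi$ is a tautology, and applying the polynomial $T_0$-proof of $\Sigma^b_1\rf_{S_Q}(\bar n)$ for $n=\mathrm{poly}(|\pi|)$ delivers a polynomial-size $T_0$-proof of the same, i.e., a $P_{T_0}$-proof of $\phi$.

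Item 2 is proved by the same scheme with ``polynomial-size proof'' replaced throughout by ``polynomial-time constructible proof''; the constructions above are already uniform in the input data, so the argument lifts without change. The equivalence of (b) and (c) in item 2 is then a standard reformulation: from a p-optimal $P$ take $T := \s + \sigma_P$, and for any proof system $Q$ compose the poly-time simulation $f$ of $Q$ by $P$ with $\sigma_P$ to obtain $T$-provable soundness of the natural formalization ``$Q(x)=y$'' via the $\s$-provable identity $P(f(x))=Q(x)$; conversely, if $T$ proves soundness of some formalization of every $Q$, then the proof system whose proofs of $\phi$ are $T$-proofs that $\phi$ is a $\Sigma^q_1$-tautology p-simulates every $Q$ by combining a $\Sigma^b_1$-complete $T$-derivation of ``$Q(\pi)=\phi$'' with the $T$-provable soundness of $Q$, all in polynomial time.

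The main technical obstacle I expect is the careful length-accounting in the mutual translation between bounded $\forall\Sigma^b_1$ sentences and $\Sigma^q_1$ tautologies, and in particular checking that the parameters $e,u,x,z$ arising from the $S_Q$-derivation constructed above all satisfy the bound $\bar n$ imposed by the instance of $\Sigma^b_1\rf_{S_Q}(\bar n)$ one wishes to apply. This requires invoking universality of $\mu_1$ at the right length parameter and ensuring that the $S_Q$-derivation witness $u$ itself remains polynomial in $|\pi|$, so that the single reflection instance at $n=\mathrm{poly}(|\pi|)$ is strong enough to conclude the desired tautology.
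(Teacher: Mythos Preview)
Your overall plan matches the paper's: adapt the Kraj\'{\i}\v{c}ek--Pudl\'ak arguments by replacing $\Con_S$ with $\Sigma^b_1\rf_S$ and propositional tautologies with $\Sigma^q_1$-tautologies. The direction $\neg(b)\Rightarrow\neg(a)$ (from a p-optimal $P$, build $T_0=\s+\sigma_P$ and use trivial proofs of the translations $\phi^S_n$) and the equivalence $(b)\Leftrightarrow(c)$ are essentially identical to the paper's treatment.

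There is, however, a real gap in your $\neg(a)\Rightarrow\neg(b)$ direction. You write that $S_Q$ has a short proof that ``$\phi$ is a tautology'' and that applying $\Sigma^b_1\rf_{S_Q}(\bar n)$ inside $T_0$ then yields a $T_0$-proof of the same. But ``$\phi$ is a $\Sigma^q_1$-tautology'' is the $\Pi^b_2$ statement $\forall v\,\sats(\bar\phi,v)$, whereas $\Sigma^b_1\rf_{S_Q}(\bar n)$ only reflects $S_Q$-proofs of formulas of the shape $\mu_1(\dot e,\dot x,\dot z)$; it says nothing about $S_Q$-proofs of $\forall\Sigma^b_1$ sentences. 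The repair is to pass to the $\Sigma^b_1$ instances $\sats(\bar\phi,\bar v)$: for each assignment $v$ the $S_Q$-proof of $\sats(\bar\phi,\bar v)$ (instantiate $\sigma_Q$ at $\bar\pi,\bar v$ and use $Q(\bar\pi)=\bar\phi$) is constructible in time polynomial in $|\pi|,|v|$, and one must argue that $T_0\supseteq\s$ proves \emph{uniformly in $v$} the existence of such short $S_Q$-proofs (which holds because the construction is a poly-time function). Only then does a single instance $\Sigma^b_1\rf_{S_Q}(\bar n)$ give $\forall v\,\sats(\bar\phi,v)$ inside $T_0$. The paper handles exactly this point explicitly: it introduces a poly-time substitution function $f$ turning a $T'$-proof of $\tauts(\bar\psi)$ into a $T'$-proof of $\tauts(\overline{\psi[v/\vec p\,]})$---a closed $\Sigma^q_1$ formula, whose tautologyhood is now $\Sigma^b_1$---and then ensures $T$ knows about $f$ (via an auxiliary theory $T''$ and the p-optimality of $P_T$ for ordinary $\taut$ that follows from $\neg\rfn$) before invoking $\Sigma^b_1\rf_{T'}(\bar n)$. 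Your more direct route also works once the instance-by-instance reduction and the uniform provability of the proof-existence statement are made explicit, but as written the step ``apply reflection to get tautologyhood'' does not go through.
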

	\begin{proof}
		Here we prove the second part. The proof of the first part is similar.
		\begin{itemize}
			\item [$(a)\Ra(b)$.] Suppose $(b)$ is false. Let $P$ be a p-optimal proof system for $\Sigma^q_1$-$\taut$. Let $T:=\s + \all \pi\tauts(P(\pi))$ in which $\tauts$ is the $\Pi^b_2$ formula that checks whether a $\Sigma^q_1$ propositional formula is true or not. Let $S\in\mathcal{T}$. Note that for every $n\in\mathbb{N}$, the translation of $\Sigma^b_1\rf_S(\bar{n})$ is $\Sigma^q_1$ formula $\theta_n$ such that $\s\vdash \Sigma^b_1\rf_S(\bar{n})\equiv \tauts(\theta_n)$ and this proof can be constructed in poly time (see \cite{bu97} for the propositional case.) $(*)$. Let $P'$ be a proof system defined as follows:$$P'(x)=\begin{cases}\theta_n & x = \theta_n\text{ for some $n$}\\P(x) & \text{o.w.}\end{cases}$$
			Let $f$ be the poly time function such that $P(f(\pi))= P'(\pi)$ for every $\pi\in \mathbb{N}$. Note that for every $n\in\mathbb{N}$, the proof of $\s\vdash P(f(\theta_n))=\theta_n$ can be constructed in poly time, therefore by soundness of $P$ which is provable in $T$ and $(*)$, the proof of $T\vdash\Sigma^b_1\rf_S(\bar{n})$ for every $n\in\mathbb{N}$ can be constructed in poly time too.
			\item [$(b)\Ra(c)$.] Suppose $(c)$ is false. Let $T\in\mathcal{T}$ be a theory that falsifies $(c)$. We want to prove that $P^{\Sigma^q_1}_T$ is p-optimal . Let $P'$ be a proof system and $P''$ be one of its formalizations such that $T\vdash \all \pi\tauts(P''(\pi))$.  Note that there exists a poly time function $f$ such that $$T\vdash \all \pi,\phi(P''(\pi)=\phi\to Pr_T(f(\pi,\phi),\ulcorner P''(\dot{\pi})=\dot{\phi}\urcorner)),$$
			hence there exists a poly time function $h$ such that $P''(\pi)=P^{\Sigma^q_1}_T(h(\pi))$, for all $\pi\in \mathbb{N}$.
			\item [$(b)\Ra (a)$.] Suppose $(a)$ is false. Let $T\in\mathcal{T}$ be a theory that witnesses this fact. We show that $P^{\Sigma^q_1}_T$ is p-optimal. Let $\sats(\phi,v)$ be the $\Sigma^b_1$ formula that can check the satisfiability of $\Sigma^q_1$ propositional formulas. Define $T':=\s + \all \pi,v\sats(P(\pi),v)$. If $P(\pi_\psi)=\psi$, then we can find a proof $\pi'$ in poly time such that $P^{\Sigma^q_1}_{T'}(\pi')=\psi$ (*). Note that there exists a poly time function $f$ such that $$\mathbb{N}\models \all \pi,v,\phi(|v|\leq|\phi|\land P^{\Sigma^q_1}_{T'}(\pi)=\phi\to P^{\Sigma^q_1}_{T'}(f(\pi,v))=\phi\big[v/\vec{p}\big]).$$
			Let $T'':=\s+\all \pi,v,\phi(|v|\leq|\phi|\land P^{\Sigma^q_1}_{T'}(\pi)=\phi\to P^{\Sigma^q_1}_{T'}(f(\pi,v))=\phi\big[v/\vec{p}\big])$. Note that $T$ falsifies $\rfn$, hence $P_T$ is a p-optimal proof system for $\taut$, this means $P_T$ p-simulates $P_{T''}$(**). Note that propositional translations of $$\all \pi,v,\phi(|v|\leq|\phi|\land P^{\Sigma^q_1}_{T'}(\pi)=\phi\to P^{\Sigma^q_1}_{T'}(f(\pi,v))=\phi\big[v/\vec{p}\big])$$ have short proofs in $P_{T''}$ and these proofs can be constructed in poly time, hence by (*) and (**) we can find a $T'$-proof $\pi''$ of $\all v(|v|\leq |\psi|\to P^{\Sigma^q_1}_{T'}(f(\pi',v),\psi[v/\vec{p}]))$ in poly time, therefore by constructing a $\Sigma^b_1\rf_{T'}(n)$ for some suitable $n$ which is polynomial in size of $\psi$, we can find a proof $\pi^*$ such that $P^{\Sigma^q_1}_{T}(\pi^*)=\psi$. So $P^{\Sigma^q_1}_T$ is p-optimal for $\Sigma^q_1$-$\taut$.
			\item [$(c)\Ra (b)$.] Suppose $(b)$ is false. Let $T\in{\cal T}$ be a theory that witnesses this fact. Thus, the theory $\s+ \all\pi\tauts P^{\Sigma^q_1}_T(\pi)$ falsifies $(c)$.
		\end{itemize}
	\end{proof}

	Note that the previous theorem can be generalized for finite reflection principle conjectures for $\Sigma^b_i$ formulas, as ${\sf RFN}_i$.
	
	By looking at figure \ref{fig}, we observe that the upper conjectures are stronger than those that are behind them and it is not known whether an opposite implication can be proved, i.e. a weak conjecture implies a stronger one. The next theorem shows a kind of opposite implication. In terms of defined notations the next theorem shows that $\rfn$ implies $\con \lor \sat_c$.
	\begin{theorem}
		At least one of the following statements is true:
		\begin{enumerate}
			\item There is no p-optimal proof system for $\sat$,
			\item There is no p-optimal proof system for $\taut$,
			\item There exists a $T\in\mathcal{T}$ such that for every $S\in\mathcal{T}$, the $T$-proofs of $ \Sigma^b_1\rf_S(\bar{n})$ can be constructed in polynomial time.
		\end{enumerate}
	\end{theorem}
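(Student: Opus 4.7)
The plan is to argue by contrapositive: assume both (1) and (2) fail, so there exist p-optimal proof systems $R$ for $\sat$ and $P$ for $\taut$, and derive (3). Set
\[ T^\star := \s + \forall\pi\,\staut(P(\pi)) + \forall\pi\,\exists z(z<R(\pi)\land\ssat(R(\pi),z)), \]
whose added axioms are true in $\mathbb{N}$ and poly-time decidable, so $T^\star\in\mathcal{T}$. By Theorem~\ref{t3.4}(2), statement (3) is equivalent to the negation of item (c) there; hence it suffices to verify that for every proof system $\Pi'$ for $\Sigma^q_1$-$\taut$ one has $T^\star\vdash\forall\pi\,\tauts(\Pi'(\pi))$.

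Fix such $\Pi'$ and associate to it the $\sat$ proof system $Q_{\Pi'}$ sending $\langle\pi,\vec p\,\rangle$ to $\phi_\psi(\vec p,\vec q)$ whenever $\Pi'(\pi)=\exists\vec q\,\phi_\psi(\vec p,\vec q)$. Since every output of $\Pi'$ is a $\Sigma^q_1$-tautology, $Q_{\Pi'}$ only outputs satisfiable formulas, so it is a valid proof system for $\sat$. By p-optimality of $R$ there is a poly-time $h$ with $R(h(\pi,\vec p\,))=Q_{\Pi'}(\pi,\vec p\,)=\phi_\psi(\vec p,\vec q)$ for every $\pi,\vec p$. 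For any fixed $\pi$, the equation $E_\pi(\vec p\,):=\bigl(R(h(\pi,\vec p\,))=\phi_\psi(\vec p,\vec q)\bigr)$, viewed as a Boolean circuit in the bits of $\vec p$ by expanding the poly-time computations on each side, is a propositional tautology of size polynomial in $|\pi|+|\psi|$; by p-optimality of $P$ it has a poly-size $P$-proof $\tau$.

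Reasoning inside $T^\star$: the axiom $\forall\pi\,\staut(P(\pi))$ applied to $\tau$ delivers $\forall\vec p\,E_\pi(\vec p\,)$; the axiom $\forall\pi\,\exists z(z<R(\pi)\land\ssat(R(\pi),z))$ applied at $h(\pi,\vec p\,)$ delivers satisfiability of $R(h(\pi,\vec p\,))$. Combining via $E_\pi$, the formula $\phi_\psi(\vec p,\vec q)$ is satisfiable for every $\vec p$, hence $\tauts(\Pi'(\pi))$. Since $\pi$ was arbitrary, $T^\star\vdash\forall\pi\,\tauts(\Pi'(\pi))$, and via Theorem~\ref{t3.4}(2) this gives statement (3).

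The main obstacle is the propositional encoding step: turning the syntactic equality $R(h(\pi,\vec p\,))=\phi_\psi(\vec p,\vec q)$ into a genuine polynomial-size propositional tautology $E_\pi(\vec p\,)$, and then transporting the resulting tautology-ness back to the first-order statement $\forall\vec p\,E_\pi(\vec p\,)$ inside $T^\star$ via the $P$-soundness axiom. The poly-size encoding is supplied by the fact that both $R\circ h$ and substitution-into-$\phi_\psi$ are poly-time on inputs of size polynomial in $|\pi|+|\psi|$, while its tautology-ness is immediate from $R$'s simulation of $Q_{\Pi'}$ via $h$. The delicate part is coordinating this with the $\Delta^b_1$ graph-formalizations of $P$ and $R$ used in the soundness axioms, so that $T^\star$ actually recognizes $P(\tau)$ as $E_\pi$ and propagates the tautology-ness to the intended first-order form.
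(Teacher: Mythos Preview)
Your overall strategy---combine a p-optimal $\sat$ system with a p-optimal $\taut$ system to handle $\Sigma^q_1$-tautologies---matches the paper's. But the argument as written has a genuine gap in the step ``Since $\pi$ was arbitrary, $T^\star\vdash\forall\pi\,\tauts(\Pi'(\pi))$'', and the phrase ``by p-optimality of $P$ it has a poly-size $P$-proof $\tau$'' hides exactly the point where it opens.

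p-Optimality of $P$ does not give short $P$-proofs of arbitrary tautologies; it gives a poly-time simulation of any other $\taut$ system. To get a $P$-proof of $E_\pi$ you must introduce an auxiliary $\taut$ system $P''$ with $P''(\pi)=E_\pi$, and then p-optimality produces a poly-time $g$ with $P(g(\pi))=E_\pi$. Now your ``reasoning inside $T^\star$'' needs $T^\star\vdash\forall\pi\,\bigl(P(g(\pi))=E_\pi\bigr)$: without it, the $P$-soundness axiom only tells $T^\star$ that $P(g(\pi))$ is \emph{some} tautology, not that it is $E_\pi$. This identity is a true $\forall\Delta^b_1$ sentence that $T^\star=\s+\text{(soundness of }P,R)$ has no reason to prove; and since $g$ depends on $\Pi'$, you cannot simply add it as an axiom to $T^\star$. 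What your argument actually establishes is that for each standard $n$ one can build in poly time a $T^\star$-proof of $\tauts(\Pi'(\bar n))$ (using $\Sigma^b_1$-completeness for the single instance $P(g(\bar n))=E_{\bar n}$), which is Theorem~\ref{t3.4}(2)(b), not (c).

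This is essentially how the paper proceeds: it works instance-by-instance, constructing $T$-proofs in polynomial time rather than a single universal $T$-proof. Concretely, the paper takes the substitution function $f$ with $P^{\Sigma^q_1}_S(f(\pi,v))=\psi[v/\vec p\,]$, puts this $\forall\Pi^b_1$ fact into an auxiliary theory $S'$, uses p-optimality of $P_T$ to obtain short $T$-proofs of its propositional translations for each fixed $\pi_\psi$, and then applies $\sat$-soundness. Your route can be repaired in two ways: either drop the claim to (c) and observe that you have in fact verified (b); or keep (c) but replace $\Pi'$ by the formalization $\Pi'_{\text{new}}(\pi,y):\equiv\bigl(\Pi'(\pi)=y\wedge P(g(\pi))=E_\pi\bigr)$, which agrees with $\Pi'$ in $\N$ and bakes the needed identity into the hypothesis, so that $T^\star$ can carry out your argument uniformly in $\pi$.
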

	\begin{proof}
		Suppose $(1)$ and $(2)$ are false. Let $T\in\mathcal{T}$ be the theory that falsifies $(1)$ and $(2)$ simultaneously. Suppose is $S$ in $\mathcal{T}$. We want to show that there exists a poly time function $h$ such that for every $\Sigma^q_1$ formula $\phi$ and every $S$-proof $\pi$ of $\all u(|u|\leq |\phi|\to\sats(\phi,u))$, $h(\pi)$ is a $T$-proof of $\all u(|u|\leq |\phi|\to\sats(\phi,u))$. Hence $P^{\Sigma^q_1}_T$ is p-optimal and by theorem \ref{t3.4}, $\neg\rfn$. Note that there exists a poly time function $f$ such that $$\mathbb{N}\models \all \pi,v,\phi(|v|\leq|\phi|\land P^{\Sigma^q_1}_S(\pi)=\phi\to P^{\Sigma^q_1}_S(f(\pi,v))=\phi\big[v/\vec{p}\big]).$$ Suppose $P^{\Sigma^q_1}_S(\pi_\psi)=\psi$ for a $\Sigma^q_1$ formula $\psi$, hence we can find a short $T$-proof of $P^{\Sigma^q_1}_S(\pi_\psi)=\psi$ in poly time $(*)$.
		
		Define $S':=\s+\all \pi,v,\phi(|v|\leq|\phi|\land P^{\Sigma^q_1}_S(\pi)=\phi\to P^{\Sigma^q_1}_S(f(\pi,v))=\phi\big[v/\vec{p}\big])$. Because $T$ falsifies $(2)$ and $S'$ has a short proof of translation of $$\all v(|v|\leq|\psi|\land P^{\Sigma^q_1}_S(\pi_\psi)=\psi\to P^{\Sigma^q_1}_S(f(\pi_\psi,v))=\psi\big[v/\vec{p}\big]),$$ we can find a short $T$-proof of translation of it in poly time. Therefore by $(*)$ we get a $T$-proof of $\all v(|v|\leq|\psi|\to P^{\Sigma^q_1}_S(f(\pi_\psi,v))=\psi\big[v/\vec{p}\big])$ $(**)$. Note that $\psi\big[v/\vec{p}\big]$ does not have free variables, hence there exists a poly time function $g$ such that $T$ has a short proof of $$\all v(|v|\leq|\psi|\to\tauts(\psi\big[v/\vec{p}\big])\equiv \ex u\ssat(g(\psi\big[v/\vec{p}\big]),u)).$$
		 Hence by $(**)$ and by the fact that $T$ proves the $\sat$ proof system defined from $S$ (for some formalization of it) is sound (because $T$ falsifies $(1)$ ), a $T$-proof of $\all v(|v|\leq|\psi|\to \sats(\psi,v))$ can be constructed in poly time.
	\end{proof}
	\section{Nondeterministic vs deterministic computations and existence of optimal proof systems}
	In this section, we investigate the relationship between the equality of nondeterministic and deterministic computation and the existence of optimal proof systems. The trivial case is $\P=\NP$ that implies the existence of poly time computable proofs for $\taut$. The first step in this direction was done in \cite{kp89}. They showed that $\E=\NE$ implies existence of p-optimal proof systems for $\taut$. Latter, It was shown in \cite{mt98} that the condition ${\sf EE}={\sf NEE}$ is sufficient. This phenomenon was investigated further in \cite{bg98} by defining the fat and slim complexity classes and proving the following results about them:
	\begin{enumerate}
		\item \begin{enumerate}
			\item For every slim class ${\cal C}$, ${\cal C}={\sf CoN}{\cal C}$ implies the existence of a nonuniform p-optimal proof system for $\taut$.
			\item For every slim class ${\cal C}$, $\sf N{\cal C}={\sf CoN}{\cal C}$ implies the existence of a p-optimal proof system for $\taut$.
		\end{enumerate}
		\item \begin{enumerate}
			\item For every fat class ${\cal C}$, there exists an oracle $A$ such that ${\cal C}^A={\sf CoN}{\cal C}^A$, but there is no p-optimal proof system for $\taut^A$.
			\item For every fat class ${\cal C}$, there exists an oracle $A$ such that ${\sf N}{\cal C}^A={\sf CoN}{\cal C}^A$, but there is no nonuniform p-optimal proof system for $\taut^A$.
		\end{enumerate}
	\end{enumerate}
	First of all, we prove a similar sufficient condition for the existence of nonuniform and uniform p-optimal proof system for $\Sigma^q_1$-$\taut$. Note that by theorem \ref{t3.4}, the existence of such proof system is equivalent to $\neg\rfnN$ and $\neg\rfn$, respectively. It is shown in \cite{pu18} that $\rfnN$ implies $\NP\not=\Co\NP$. The next proposition strengthens this result. To state the next proposition, we need to define $k$'th Exponential Time Hierarchy.
	\begin{definition}
		Define the following functions inductively:
		\begin{enumerate}
			\item $|x|_n=\begin{cases}|x|_0=x \\|x|_{n+1}=||x|_n|\end{cases}$,
			\item $2^x_n=\begin{cases}2^x_0=x \\2^x_{n+1}=2^{2^x_n}\end{cases}$.
		\end{enumerate}
	\end{definition}
	\begin{definition}
		For every $k$, define k'th {\bf Exponential Time Hierarchy} ( $\E\H_k$)  as follows:
		\begin{itemize}
			\item For every $L\subseteq \N$, $L$ is in $\E_k$ iff there exists a $\Delta^b_1$ formula $\phi(x)$ in $\s$ such that $\all n(n\in L \lr \phi(2^n_k))$,
			\item For every $L\subseteq \N$, $L$ is in $\Sigma^{\E_k}_i$ for some $i>0$ iff there exists a $\Sigma^b_i$ formula $\phi(x)$ such that $\all n(n\in L \lr \phi(2^n_k))$,
			\item For every $L\subseteq \N$, $L$ is in $\Pi^{\E_k}_i$ for some $i>0$ iff there exists a $\Pi^b_i$ formula $\phi(x)$ such that $\all n(n\in L \lr \phi(2^n_k))$.
		\end{itemize}
	\end{definition}
	Note that we do not have a exponentiation function symbol in ${\cal L}_{BA}$, therefore by formula  $\all n\phi(2^{f(n)}_k)$ for some poly time function $f$ and some fix $k$, we mean $\all m,n(\psi_{f,k}(m,n)\to \phi(m))$ in which $\psi_{f,k}(m,n)$ is a $\Delta^b_1$ formula in $\s$ that is true iff $m=2^{f(n)}_k$.
	\begin{proposition}\label{t1}
		The following statements are true:
		\begin{enumerate}
			\item 
			If for every $T\in\mathcal{T}$, there exists $S\in\mathcal{T}$ such that the $T$-proofs of $\Sigma^b_1\rf_S(\bar{n})$ are not polynomially bounded in $n$, then $\NE\not = \Sigma^\E_2$.
			\item If for every $T\in\mathcal{T}$, there exists $S\in\mathcal{T}$ such that the $T$-proofs of $ \Sigma^b_1\rf_S(\bar{n})$ can not be constructed in polynomial time, then $\E\not=\Sigma^\E_2$.
		\end{enumerate}
	\end{proposition}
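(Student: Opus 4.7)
The plan is to prove each part by contrapositive, using Theorem~\ref{t3.4} to recast the negation of each hypothesis as the existence of a p-optimal proof system (nonuniform for Part~1, uniform for Part~2) for $\Sigma^q_1$-$\taut$. Equivalently, assuming $\NE = \Sigma^\E_2$ (resp.\ $\E = \Sigma^\E_2$), I will exhibit a theory $T \in \mathcal{T}$ such that for every $S \in \mathcal{T}$ the $T$-proofs of $\Sigma^b_1\rf_S(\bar n)$ are polynomially bounded in $n$ (resp.\ constructible in polynomial time). By Theorem~\ref{t3.4} this yields the desired p-optimal system, completing the contrapositive.

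The core idea follows the Krajicek--Pudlak / Messner--Tor\'an template: use the collapse hypothesis to fix a single machine deciding a suitably hard predicate, then enshrine its correctness as an axiom of $T$. Note that $\Sigma^b_1\rf_S(\bar n)$ is $\Pi^b_2$ in $n$, so its tally-encoded family lies in $\Pi^\E_2$. For Part~2, complementing $\E = \Sigma^\E_2$ gives $\E = \Pi^\E_2$, providing a deterministic machine $A$ that decides a $\Pi^\E_2$-complete problem within $\E$. Set $T := \s + \mathrm{Correct}(A)$, where $\mathrm{Correct}(A)$ is the natural $\mathcal{L}_{BA}$-axiom asserting that $A$ decides this problem. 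Because $\Sigma^b_1\rf_S(\bar n)$ reduces to a query of $A$ on an instance of size polynomial in $n$ via a $\Delta^b_1$-definable reduction, the axiom $\mathrm{Correct}(A)$ lets $T$ prove $\Sigma^b_1\rf_S(\bar n)$ in polynomial size, and the determinism of $A$ ensures the proof can be produced in polynomial time. For Part~1, $\NE = \Sigma^\E_2$ only supplies a nondeterministic machine $N$ for the same problem, so the analogous theory $T := \s + \mathrm{Correct}(N)$ has short $T$-proofs (via an existential guess of the accepting branch of $N$) that need not be constructible in polynomial time, matching the nonuniform case.

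The main obstacle is the bounded-arithmetic formalization of $\mathrm{Correct}(\cdot)$ and the reduction: one must express the reduction from $\Sigma^b_1\rf_S(\bar n)$ to the $\Pi^\E_2$-complete instance as a $\Delta^b_1$-function of $\s$, and align the tally-encoded input sizes used to define $\E$ and $\Pi^\E_2$ with the binary encodings in $\s$, so that the $T$-proof length stays polynomial in $n$ rather than $2^n$. Once these matchings are in place, the internal simulation of $A$ (or $N$) inside $T$ produces the required short (and, for Part~2, polynomial-time constructible) derivations of $\Sigma^b_1\rf_S(\bar n)$, and both contrapositives follow.
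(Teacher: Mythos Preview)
Your proposal is correct and follows essentially the same route as the paper: assume the collapse, observe that $\Sigma^b_1\rf_S(\bar n)$ lies in $\Pi^\E_2$, reduce it to a fixed $\NE$ (resp.\ $\E$) predicate via the collapse, and add the correctness of that predicate as an axiom to obtain $T$ with polynomially short (resp.\ poly-time constructible) proofs via $\Sigma^b_1$-completeness. The only cosmetic differences are that the paper works explicitly with the universal predicates $\UN,\UU$ and linear-time reductions rather than abstract $\mathrm{Correct}(\cdot)$ axioms, and it does not invoke Theorem~\ref{t3.4} (your appeal to it is harmless but unnecessary, since you directly falsify the hypothesis).
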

	\begin{proof}
		Here we prove the statement (1). The statement (2) has a similar proof. Let $\NE=\Sigma^\E_2$. This implies that $\NE=\Pi^\E_2$, because $\Co\NE\subseteq \Sigma^\E_2$. Define the following languages: 
		\begin{enumerate}
			\item $L_\NE = \{n=\left<e,x,m\right >\in\N :\N\models \mu_1(e,x,2^{2^{|m|}})\}\in \NE$.
			\item $L_{\Pi^\E_2}=\{n=\left<e,x,m\right >\in\N :\N\models \neg\mu_2(e,x,2^{2^{|m|}})\}\in\Pi^\E_2$.
		\end{enumerate}
	Note that the above languages are hard for their respective complexity class under linear time reductions.	By definition there exist the following predicates:
		\begin{enumerate}
			\item There exists a $\Pi^b_2$ predicate $\UU$ such that $\N\models \all n(\UU(2^n)\lr n\in L_{\Pi^\E_2})$,
			\item There exists a $\NP$ predicate $\UN$ such that $\N\models \all n(\UN(2^n)\lr n\in L_{\NE})$.
		\end{enumerate}
		Note that $\NE=\Pi^\E_2$ implies that there exists a linear time function $f$ such that $$\N\models \all n(\UU(2^n)\lr\UN(2^{f(n)})).$$
		Let $T\in\mathcal{T}$ be a theory with the following properties:
		\begin{enumerate}
			\item $T\vdash \UN(2^n)\text{ is }\NE$-hard with respect to linear time reductions,
			\item $T\vdash \UU(2^n)\text{ is }\Pi^\E_2$-hard with respect to linear time reductions,
			\item $T\vdash  \all n(\UU(2^n)\lr\UN(2^{f(n)}))$
		\end{enumerate}
		Let $T'$ be in $\mathcal{T}$. This implies $\Sigma^b_1\rf_{T'}(x)\in\Pi^\E_2$, so by the mentioned properties of $T$ there exists a linear time function $g$ such that $T\vdash  \all n\big(\Sigma^b_1\rf_{T'}(n)\lr\UN(2^{f(g(n))})\big)$. Because $\UN(x)$ is $\Sigma^b_1$ and also $\s\subseteq T$, there exists a polynomial $r(x)$ such that $$T\vdash \all x\big(\UN(x)\to\ex y\big(|y|\leq r(|x|) \land Pr_T\big(y,\ulcorner\UN(\dot{x})\urcorner\big)\big)\big).$$
		This implies $$T\vdash \all x\big(\UN(2^{f(g(x))})\to\ex y\big(|y|\leq r(f(g(x))+1) \land Pr_T\big(y,\ulcorner\UN(2^{f(g(\dot{x}))})\urcorner\big)\big)\big).$$
		Note that $\N\models \all n\UN(2^{f(g(n))})$, so for every $n\in\N$, $T\sststile{}{r(f(g(n))+1)} \UN(2^{f(g(\bar{n}))})$, hence there exists a polynomial $p(x)$ such that for every $n\in \N$, $T\sststile{}{p(n)} \Sigma^b_1\rf_{T'}(\bar{n})$.
	\end{proof}
	In the next theorem, we will investigate how much optimality we can get by assuming the equality of nondeterministic and (co-non)deterministic computation for fat classes in sense of \cite{bg98}, such as ${\EXP}$ and ${\E_k}$ for $k>2$. To state the theorem, we need some definitions. Let $2^{o(n)}$ and $2^{(\log n)^{O(1)}}$ be sub- exponential (subExp) and quasi polynomial (Qp) respectively. The concept of simulations and reductions can be defined in terms of other time classes like sub-exponential or quasi-polynomial time instead of polynomial time and the relations in figure \ref{fig} remain true, hence it is natural to ask whether these new conjectures are true or not. An oracle is constructed in \cite{gssz04} that $\disj\NP$ pairs do not have complete problems with respect to the poly time reductions. It is not hard to modify that construction to make an oracle in which $\disj\NP$ pairs do not have complete problem, with respect to sub-exponential time reductions, hence conjectures weaker than it are true with respect to that oracle. For the other branch, we will construct an oracle that $\disj\Co\NP$ pairs do not have a complete problem, with respect to poly time reductions and it is easy to modify the construction in such a way that $\disj\Co\NP$ pairs do not have a complete problem, with respect to sub-exponential time reductions. Hence, the oracles provide evidence that these new conjectures are true. 
	\begin{theorem}\label{t2}
		The following statements are true:
		\begin{enumerate}
			\item If there is no nonuniform subExp-optimal proof system for $\taut$, then for every $k$, $\NE_k\not=\Co\NE_k$.
			\item If there is no subExp-optimal proof system for $\taut$, then for every $k$, $\E_k\not=\NE_k$.
			\item If there is no nonuniform Qp-optimal proof system for $\taut$, then $\NEXP\not=\Co\NEXP$.
			\item If there is no Qp-optimal proof system for $\taut$, then $\EXP\not=\NEXP$.
		\end{enumerate}
	\end{theorem}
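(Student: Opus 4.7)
The plan is to prove the contrapositive of each part: assume the collapse of the relevant nondeterministic and (co-non)deterministic complexity classes, and construct an optimal proof system for $\taut$ under the correspondingly weakened simulation notion. The construction generalizes the classical Krajíček--Pudlák argument of \cite{kp89} and the reduction technique already used in the proof of Proposition \ref{t1}; in particular I target the consistency statements $Con_S(\bar n)$ (which correspond to $\taut$ via the analogue of Theorem \ref{t2.7}), rather than the finite reflection principle $\Sigma^b_1\rf_S(\bar n)$ that was relevant for $\Sigma^q_1$-$\taut$.

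The first step is to record Qp- and subExp-versions of Theorems \ref{t2.2} and \ref{t2.7}: a Qp-optimal (resp.\ subExp-optimal, or their nonuniform variants) proof system for $\taut$ exists iff there is $T\in\mathcal{T}$ such that for every $S\in\mathcal{T}$, $T$-proofs of $Con_S(\bar n)$ can be constructed in Qp-time (resp.\ bounded in size by a subExp function) in the parameter $n$. These go through by the same classical argument, replacing polynomial time with the appropriate larger resource class throughout -- there is no conceptual barrier, only resource-tracking.

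For part (4), I would use $\EXP = \NEXP$ (so also $\EXP = \Co\NEXP$) as follows. The statement $Con_S(\bar n)$ as a predicate of $n$ (written in binary, so of input length $|n|$) lies naturally in $\Co\NEXP$: a counterexample is an $S$-proof of $\bot$ of bit-length $\leq n = 2^{|n|}$, verifiable in time $2^{O(|n|)}$. Under the collapse, there is a $\Delta^b_1$ formula $\phi(y,x)$ and a polynomial $r$ with $\N\models\forall n\,(Con_S(\bar n)\lr \exists y\,(|y|\leq r(|n|)\land\phi(y,n)))$, and moreover the witness $y$ is computable from $n$ in deterministic $\EXP$-time, i.e., in time $2^{\mathrm{polylog}(n)}$, which is Qp in $n$. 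Take $T\in\mathcal{T}$ extending $\s$ by this equivalence together with a $\Delta^b_1$-formalization of the $\EXP$-algorithm. A $T$-proof of $Con_S(\bar n)$ is then constructed in Qp time by composing a short uniform $T$-proof of the equivalence with a $T$-proof of the true $\Delta^b_1$ fact $\phi(\bar y,\bar n)$, whose size is polynomial in $|\bar y|+|\bar n|=\mathrm{polylog}(n)$ by the theorem quoted just before the definition of $\bar n$, followed by existential introduction. Part (3) is analogous but only uses $\NEXP=\Co\NEXP$; this gives existence of the short $\Sigma^b_1$-witness (and hence existence of short $T$-proofs) but not uniform Qp-time construction, yielding nonuniform Qp-optimality. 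Parts (1) and (2) run the same scheme inside the $\E_k$ hierarchy using the collapses $\NE_k=\Co\NE_k$ and $\E_k=\NE_k$ respectively, with $2^n_k$ playing the role of $2^{|n|}$.

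The main obstacle is the bookkeeping connecting the collapse at a given hierarchy level with the correct size measure on the simulation function. The value parameter $n$ used for measuring $T$-proofs of $Con_S(\bar n)$ and the length parameter $|\pi|$ used for measuring the simulator are related exponentially through the propositional translation, which is precisely what forces subExp and Qp as the natural target simulation classes in the higher-hierarchy setting. Verifying at each step that the size of the $\Sigma^b_1$-witness delivered by the collapse lies inside the target subExp/Qp bound -- and that the resulting polynomial $T$-proof inherits this bound when translated to propositional form -- is where genuine care, rather than formulaic substitution, is needed.
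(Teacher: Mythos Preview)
Your overall strategy---prove the contrapositive, internalise the collapse in a theory $T$, observe that consistency lands in the collapsing class, pull it to the $\Sigma^b_1$ side, and then invoke provable $\Sigma^b_1$-completeness together with the subExp/Qp analogue of Theorem~\ref{t2.2}---is exactly the paper's. There is, however, a concrete slip in your treatment of part~(4): from $\EXP=\NEXP$ you only obtain $Con_S(\bar n)\in\NEXP$ as a predicate of the binary input $n$, so the witness has length $2^{r(|n|)}$, not $r(|n|)$; your displayed bound $|y|\le r(|n|)$ with $\phi$ a $\Delta^b_1$ relation would place $Con_S$ in $\NP$, far stronger than the hypothesis. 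With the correct bound one gets $|\bar y|+|\bar n|=2^{\mathrm{polylog}(n)}$ rather than $\mathrm{polylog}(n)$, and the $T$-proof has size $\mathrm{poly}(2^{\mathrm{polylog}(n)})=2^{\mathrm{polylog}(n)}$, still Qp in $n$---so the conclusion survives, but the arithmetic is wrong precisely at the step you flagged as requiring care.

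More seriously, your one-line treatment of parts~(1) and~(2) hides a genuine obstacle. Running the part-(4) scheme verbatim---treating $Con_S(\bar n)$ as a $\Co\NE_k$ predicate of binary $n$ and reducing through the complete problem---produces a $\Sigma^b_1$ statement at scale $2^{h(n)}_k$ with $h$ at best linear-time, and the $T$-proof coming from $\Sigma^b_1$-completeness then has size $\mathrm{poly}(2^{h(n)}_{k-1})$, which for $k\ge 2$ is already $2^{n^{O(1)}}$: exponential in $n$, not sub-exponential. The paper sidesteps this by introducing the rescaled statement
\[
\Con^k_{T'}(x):=\forall y\bigl(|y|_k\le x\to\neg Pr_{T'}(y,\ulcorner\bot\urcorner)\bigr),
\]
which is a \emph{tight} $\Co\NE_k$ predicate of the small parameter $x$, and then instantiates $x=|n|_{k-1}$, using that $\Con^k_{T'}(|n|_{k-1})$ implies $Con_{T'}(n)$. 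With this re-parametrisation the proof size becomes $\mathrm{poly}(2^{h(|n|_{k-1})}_{k-1})=O(2^{n^\epsilon})$ for some $\epsilon<1$. Your phrase ``$2^n_k$ playing the role of $2^{|n|}$'' may be gesturing at this, but as written it does not distinguish the correct rescaling from the naive approach that fails; the introduction of $\Con^k$ (or an equivalent re-parametrisation of the consistency statement by $|n|_{k-1}$) must be made explicit.
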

	\begin{proof}
		Here we only prove the statement (1). The proofs of the other statements are similar. Let $\NE_k=\Co\NE_k$ for some $k>0$. Define the following complete languages: 
		\begin{enumerate}
			\item $L_{\NE_k} = \{n=\left<e,x,m\right >\in\N :\N\models \mu_1(e,x,2^{|m|}_{k+1})\}\in \NE_k$.
			\item $L_{\Co\NE_k} = \{n=\left<e,x,m\right >\in\N :\N\models \neg\mu_1(e,x,2^{|m|}_{k+1})\}\in \Co\NE_k$.
		\end{enumerate}
	Note that the above languages are hard for their respective complexity class under linear time reductions. By definition there exist the following predicates:
		\begin{enumerate}
			\item There exists a $\NP$ predicate $\UN$ such that $\N\models \all n(\UN(2^n_k)\lr n\in L_{\NE_k})$,
			\item There exists a $\Co\NP$ predicate $\UC$ such that $\N\models \all n(\UC(2^n_k)\lr n\in L_{\Co\NE_k})$.
		\end{enumerate}
		Note that $\NE_k=\Co\NE_k$ implies that there exists a linear time  function $f$ such that $$\N\models \all n(\UC(2^n_k)\lr\UN(2^{f(n)}_k)).$$
		Let $T\in\mathcal{T}$ be a theory with the following properties:
		\begin{enumerate}
			\item $T\vdash \UN(2^n_k)\text{ is }\NE_k$-hard with respect to linear time reductions,
			\item $T\vdash \UC(2^n_k)\text{ is }\Co\NE_k$-hard with respect to linear time functions,
			\item $T\vdash  \all n(\UC(2^n_k)\lr\UN(2^{f(n)}_k))$
		\end{enumerate}
		Let $T'$ be in $\mathcal{T}$. For every $i$, define $\Con^i_{T'}(x):=\all y(|y|_i\leq x\to \neg Pr_{T'}(y,\ulcorner \bot \urcorner)$, hence $\Con^k_{T'}(x)\in\Co\NE_k$. So by the mentioned properties of $T$ there exists a linear time function $g$ such that $T\vdash  \all n\big(\Con^k_{T'}(n)\lr\UN(2^{f(g(n))}_k)\big)$. Because $\UN(x)$ is $\Sigma^b_1$ and also $\s\subseteq T$, there exists a polynomial $r(x)$ such that $$T\vdash \all x\big(\UN(x)\to\ex y\big(|y|\leq r(|x|) \land Pr_T\big(y,\ulcorner\UN(\dot{x})\urcorner\big)\big)\big).$$
		This implies $$T\vdash \all x\big(\UN(2^{f(g(x))}_k)\to\ex y\big(|y|\leq r(2^{f(g(x))}_{k-1}+1) \land Pr_T\big(y,\ulcorner\UN(2^{f(g(\dot{x}))}_k)\urcorner\big)\big)\big).$$
		Note that $\N\models \all n\UN(2^{f(g(n))}_k)$, so for every $n\in\N$, $T\sststile{}{r(2^{f(g(n))}_{k-1}+1)} \UN(2^{f(g(\bar{n}))}_k)$, hence there exists a polynomial $p(x)$ such that for every $n\in \N$, $T\sststile{}{p(2^{f(g(n))}_{k-1})} \Con^k_{T'}(\bar{n})$, hence $T\sststile{}{p(2^{f(g(|n|_{k-1}))}_{k-1})} \Con^k_{T'}(|\bar{n}|_{k-1})$, so there exists a polynomial $q(x)$ such that for every  $n\in \N$, $T\sststile{}{q(2^{f(g(|n|_{k-1}))}_{k-1})} \Con^1_{T'}(\bar{n})$. Note that there exists $0<\epsilon<1$ such that $q(2^{f(g(|n|_{k-1}))}_{k-1})=O(2^{n^\epsilon})$. By the fact that proof of theorem \ref{t2.2} is adoptable in case of quasi polynomial and sub-exponential, the proof is completed.
	\end{proof}
	Note that similar theorems can be proved for $\rfnN$ and $\rfn$. The main problem in proof os theorem \ref{t2} that does not permit us to prove that nonexistence of nonuniform p-optimal proof systems implies separation of $\NE_k$ and $\Co\NE_k$ for $k>1$, is that these classes are not closed under reductions, but we can separate these classes if we strengthen our assumption like the following theorem.
	\begin{theorem}\label{t4.3}
		َLet $k>0$, then at least one of the following statement is true:
		\begin{enumerate}
			\item There is no recursive function $F(x)$ such that $$\N\models \all e,x(\neg\mu_1(e,2^x_k,2^{(2^x_{k-1}+1)^{e}}) \lr \mu_1(F(e),2^x_k,2^{(2^{x}_{k-1}+1)^{F(e)}})),$$
			\item There is no nonuniform p-optimal proof system for $\taut$.
		\end{enumerate}
		Also, a similar statement is true for p-optimality and equality of $\E_k$ and $\NE_k$.
	\end{theorem}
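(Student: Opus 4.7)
The plan is to prove $(1)\lor(2)$ by contrapositive: assume both that a recursive $F$ satisfying (1)'s equivalence exists and that a nonuniform p-optimal proof system for $\taut$ exists, and derive a contradiction. By Theorem \ref{t2.2}, fix $T\in\mathcal{T}$ polynomially bounding $Con_S(\bar n)$ for every $S\in\mathcal{T}$. By the universal-formula theorem in Section 2, choose $e_T\in\N$ so that $\neg\mu_1(e_T,2^x_k,2^{(2^x_{k-1}+1)^{e_T}})\equiv\Con^k_T(x)$, and set $e^{\ast}:=F(e_T)$, a concrete natural number by the recursivity of $F$. The hypothesis on $F$ yields
\[
\N\models\all x\,\big(\Con^k_T(x)\lr\mu_1(\bar{e^{\ast}},2^x_k,2^{(2^x_{k-1}+1)^{e^{\ast}}})\big).
\]

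Construct the theory $S:=T+\all x\,\mu_1(\bar{e^{\ast}},2^x_k,2^{(2^x_{k-1}+1)^{e^{\ast}}})$. Because $\bar{e^{\ast}}$ is a fixed numeral, $S$ is obtained from $T$ by a single additional axiom, so its axiom set remains poly-time decidable; the added axiom is true in $\N$, so $S$ is consistent, and thus $S\in\mathcal{T}$. Externally $S\vdash\all x\,\Con^k_T(x)$, hence $S\vdash\Con_T$. Applying the universal property of $T$ to $S$ yields a polynomial $p$ with $T\sststile{p(n)}{}Con_S(\bar n)$, and via the $T$-provable inclusion $T\subseteq S$ we obtain $T\sststile{O(p(n))}{}Con_T(\bar n)$ uniformly in $n$.

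To convert this polynomial bound into a genuine contradiction with Gödel's second incompleteness theorem, iterate the construction by repeated applications of $F$: set $S^{(0)}:=T$ and $S^{(j+1)}:=S^{(j)}+\all x\,\mu_1(\bar{F(e_{S^{(j)}})},2^x_k,2^{(2^x_{k-1}+1)^{F(e_{S^{(j)}})}})$. Each $S^{(j)}\in\mathcal{T}$, each $S^{(j+1)}\vdash\Con_{S^{(j)}}$ externally, and the universality of $T$ supplies polynomials $p_j$ with $T\sststile{p_j(n)}{}Con_{S^{(j)}}(\bar n)$. The syntactic shape of the added axioms is uniform across $j$ (only the concrete numeral $\bar{F(e_{S^{(j)}})}$ varies), so a careful bookkeeping via Buss's witnessing theorem combined with a Gödel-type diagonal fixed point collapses the chain into a $T$-proof of full $\Con_T$, contradicting Gödel's theorem for $T\in\mathcal{T}$.

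The main obstacle is the final collapse step: controlling the growth of the polynomials $p_j$ along the $F$-iteration and extracting the self-referential fixed point inside $T$. The uniformity of $F$ (a single recursive function whose outputs appear as concrete numerals at each level) is essential here; an $F$ that depended nontrivially on the parameter $x$ would cause the added axioms to grow with $j$ and break the diagonal closure. The parameter $k>0$ provides exactly the iterated-log slack needed to accommodate Buss's witnessing at each level of the tower.
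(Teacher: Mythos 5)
Your proposal takes the statement at face value and tries to derive a contradiction from ``a recursive $F$ exists \emph{and} a nonuniform p-optimal proof system exists.'' That is not the intended content, and more importantly the contradiction you envision cannot materialize. The paper's own proof, and the two corollaries that invoke Theorem~\ref{t4.3}, all use the theorem in the direction ``a recursive $F$ of the stated kind exists $\Longrightarrow$ a nonuniform p-optimal proof system exists'' (the corollary proof says explicitly: ``\ldots it actually gives us a recursive function $F(x)$\ldots hence by theorem~\ref{t4.3} it implies the existence of a nonuniform p-optimal proof system''). Read against the rest of the paper, item (2) of the theorem statement is an editorial slip and should assert the \emph{existence}, not the nonexistence, of the p-optimal system; the disjunction being proved is ``(1) or $\lnot$(2)''. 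So the correct proof is a direct construction, not a reductio: from $F$ one gets a theory $T\in\mathcal{T}$ that effectively proves $\NE_k=\Co\NE_k$; since $\Con^k_{T'}(x)$ is (uniformly, recursively in $T'$) a $\Co\NE_k$ predicate of $2^x_k$, one applies the provable collapse to replace it by a $\Sigma^b_1$ predicate, and then $\Sigma^b_1$-completeness of $T$ gives short $T$-proofs of the finite consistency statements, matching the Kraj\'{\i}\v{c}ek--Pudl\'ak criterion of Theorem~\ref{t2.2}. The ``rest of the proof is like Theorem~\ref{t2}'' is precisely this.

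The step in your argument that actually fails is the alleged ``collapse into a $T$-proof of full $\Con_T$.'' Having polynomially bounded $T$-proofs of $Con_S(\bar n)$, simultaneously for all $S\in\mathcal{T}$, is \emph{exactly} the content of Theorem~\ref{t2.2}(2); it is equivalent to the existence of a nonuniform p-optimal proof system and is entirely compatible with G\"odel's second incompleteness theorem, because $Con_S(\bar n)$ is, for each fixed $n$, a true bounded statement, not the $\Pi_1$ sentence $Con_S$. Iterating $S^{(j+1)} := S^{(j)} + \cdots$ and appealing to a ``diagonal fixed point'' cannot force $T\vdash Con_T$ --- if it could, $T$ would already be inconsistent, contradicting $T\in\mathcal{T}$. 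You even flagged this collapse step as the main obstacle, and rightly so: there is no way to cross from instance-wise consistency to full consistency, and the uniformity of $F$ gives you no such bridge. A secondary issue: you write ``Externally $S\vdash\all x\,\Con^k_T(x)$'', but $S := T+\all x\,\mu_1(\bar{e^\ast},\ldots)$ only knows the $\NE_k$-side; the equivalence with $\Con^k_T$ is an $\N$-truth given by the hypothesis on $F$, not something added as an axiom of $S$, so $S$ does not prove $\all x\,\Con^k_T(x)$ without an additional true $\Pi^b_1$ axiom asserting the equivalence. In any case, the intended argument never needs such an $S$; it directly turns the recursive $F$ into a verifiable collapse inside one $T$ and then reads off optimality.
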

	\begin{proof}
		Let $(1)$ be false. This implies that we can find a theory $T\in\mathcal{T}$ such that it effectively proves $\NE_k=\Co\NE_k$ and because $T$ is $\Sigma_1$-complete, $T$ can prove $\Con^k_{T'}(x)$ for some $T'\in\mathcal{T}$ is equivalent to $\phi(2^x_k)$ for some $\phi\in\Sigma^b_1$. The rest of the proof is like the proof of theorem \ref{t2}.
	\end{proof}
	
	Theorem \ref{t4.3} has interesting corollaries.
	\begin{corollary}
		The following statements are true:
		\begin{enumerate}
			\item If there is no nonuniform p-optimal proof system for $\taut$, then for every $T\in{\cal T}$ and for every $k>0$, there is a $\Pi^b_1$ formula $\phi$ such that for every $\Sigma^b_1$ formula $\psi$, $T\not\vdash \all n(\phi(2^n_k)\lr \psi(2^n_k))$.
			\item If there is no p-optimal proof system for $\taut$, then for every $T\in{\cal T}$ and for every $k>0$, there is a $\Delta^b_1$ formula $\phi$ in $\s$ such that for every $\Sigma^b_1$ formula $\psi$, $T\not\vdash \all n(\phi(2^n_k)\lr \psi(2^n_k))$.
		\end{enumerate}
	\end{corollary}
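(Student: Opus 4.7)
The plan is to prove both parts of the corollary as the uniform, proof-theoretic versions of Theorem~\ref{t4.3}, using the recursive enumerability of any $T\in\mathcal{T}$ to convert $T$-provable equivalence at $2^n_k$ into a recursive function $F$ of the kind appearing in that theorem. I work by contraposition in each case.

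For part~(1), suppose the conclusion fails: fix $T\in\mathcal{T}$ and $k>0$ such that for every $\Pi^b_1$ formula $\phi$ there is a $\Sigma^b_1$ formula $\psi$ satisfying $T\vdash \all n(\phi(2^n_k)\lr\psi(2^n_k))$. I would produce a recursive $F$ witnessing the failure of clause~(1) of Theorem~\ref{t4.3}, so that by the theorem a nonuniform p-optimal proof system for $\taut$ must exist, contradicting the corollary's hypothesis. The construction of $F$ goes as follows. For each index $e$, the predicate $n\mapsto \neg\mu_1(\bar e,2^n_k,2^{(2^n_{k-1}+1)^e})$ lies in $\Pi^{\E_k}_1$, so by the characterization of $\Pi^{\E_k}_1$ recalled in Section~2 there is a $\Pi^b_1$ formula $\phi_e$ --- uniformly and recursively computable from $e$ using the polynomial stabilization threshold from the $\mu_1$-theorem --- with $\phi_e(2^n_k)$ equivalent in $\N$ to that predicate. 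By the assumed property of $T$, some $\Sigma^b_1$ formula $\psi_e$ satisfies $T\vdash \all n(\phi_e(2^n_k)\lr\psi_e(2^n_k))$; since $T$ is r.e., enumerating $T$-proofs yields such a $\psi_e$ recursively in $e$. Finally, using the universality of $\mu_1$ for $\Sigma^b_1$, pass effectively from the index of $\psi_e$ to an $F(e)$ with $\psi_e(x)\lr \mu_1(\bar{F(e)},x,2^{(|x|+1)^{F(e)}})$ holding in $\N$ for $|x|$ beyond the threshold. Soundness of $T$ then transfers the chain of equivalences to $\N$, giving exactly the $F$ required.

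Part~(2) is completely analogous, this time invoking the $\E_k$-versus-$\NE_k$ variant of Theorem~\ref{t4.3} announced in its final remark; the only change is that the recursively extracted equivalences now go between the appropriate formula classes, and the resulting $F$ converts $\Sigma^b_1$-style indices into $\Delta^b_1$-in-$\s$ witnesses. The main obstacle is the uniformity claim: that the maps $e\mapsto \phi_e$ and $\psi_e\mapsto F(e)$ are both computable, so that their composition with the $T$-proof search is a single recursive function of $e$. This rests on the polynomial stabilization bound from the $\mu_1$-theorem, which guarantees that at the witness size $(2^n_{k-1}+1)^e$ the universal $\mu_1$ faithfully reflects the $e$-th $\Sigma^b_1$ formula, together with the standard effective translation between $\Sigma^b_1$ formulas and $\mu_1$-indices; everything else is bookkeeping.
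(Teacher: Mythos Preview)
Your approach is essentially the paper's: argue by contraposition, search $T$-proofs to extract a recursive $F$, and invoke Theorem~\ref{t4.3}. The paper is slightly more direct, searching immediately for a $T$-proof of $\all n(\neg\mu_1(\bar e,2^n_k,2^{(2^n_{k-1}+1)^{e}})\lr\psi(2^n_k))$ without your detour through an auxiliary $\phi_e$, since $\neg\mu_1(\bar e,\cdot,\cdot)$ is already a $\Pi^b_1$ formula; the intermediate step and the back-translation to a $\mu_1$-index are harmless but unnecessary.

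One point does need care. You appeal to ``soundness of $T$'' to pass the $T$-provable equivalences to $\N$, but members of $\mathcal{T}$ are only assumed \emph{consistent}, not sound, so this step as written is unjustified. It is, however, easily repaired: for each fixed numeral $\bar n$, the instance $\phi_e(2^{\bar n}_k)\lr\psi_e(2^{\bar n}_k)$ is a bounded sentence, and since $\s\subseteq T$ decides every bounded sentence correctly, consistency of $T$ already forces each instance---and therefore the universal statement---to hold in $\N$. The paper's proof does not spell this out either, but your explicit invocation of full soundness should be replaced by this observation.
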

	\begin{proof}
		The following argument is working for both cases. Suppose $k$ is fixed. If there is a $T\in{\cal T}$ such that for every $\Pi^b_1$ formula $\phi$, there exists a $\Sigma^b_1$ formula $\psi$ such that $T\vdash \all n(\phi(2^n_k)\lr \psi(2^n_k))$, then the following algorithm defines a recursive function, by giving an input $e$, enumerate all $T$-proofs and for every proof check whether it is a $T$-proof of $\all n(\neg\mu_1(e,2^x_k,2^{(2^x_{k-1}+1)^{e}})\lr \phi(2^n_k))$ for some $\Sigma^b_1$ formula $\phi$. Note that this enumeration and checking process is recursive because the axioms of $T$ are poly time decidable. Also, note that by assumption this algorithm always finds such a $\psi$, hence we can find its code and output it. Thus, according to theorem \ref{t4.3} there is a nonuniform p-optimal proof system.
	\end{proof}
	The next corollary shows that theorem \ref{t4.3} implies conditional independence for strong intuitionistic theories.
	\begin{corollary}
		Let $T$ be an intuitionistic theory such that any arithmetical theorem of $T$ is recursively realizable, then:
		\begin{enumerate}
			\item 
			If there is no nonuniform p-optimal proof system for $\taut$, then for every $k>0$, $T\nvdash\NE_k=\Co\NE_k$,
			\item If there is no p-optimal proof system for $\taut$, then for every $k>0$, $T\nvdash\E_k=\NE_k$.
		\end{enumerate}
	\end{corollary}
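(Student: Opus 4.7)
The plan is to apply Theorem~\ref{t4.3} contrapositively, using the recursive realizability of $T$ to extract from any hypothetical $T$-proof of $\NE_k = \Co\NE_k$ the very recursive function $F$ whose nonexistence is asserted in Theorem~\ref{t4.3}(1).

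For part (1), I would suppose for contradiction that $T \vdash \NE_k = \Co\NE_k$ for some $k > 0$. The natural arithmetical rendering of this class equality, using the universal $\Sigma^b_1$ predicate $\mu_1$ together with the exponential substitutions characterising the $\NE_k$ level, is a $\forall\exists$ sentence of the shape
$$\forall e \, \exists d \, \forall x \bigl( \neg \mu_1(e, 2^x_k, 2^{(2^x_{k-1}+1)^e}) \leftrightarrow \mu_1(d, 2^x_k, 2^{(2^x_{k-1}+1)^d}) \bigr).$$
Since every arithmetical theorem of $T$ is recursively realizable, a Kleene realizer of this sentence provides a total recursive function $e \mapsto F(e)$ such that, for every $e$, $F(e)$ plays the role of the witness $d$ and the inner biconditional holds in $\N$ for all $x$. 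But this $F$ is precisely the recursive function that Theorem~\ref{t4.3}(1) forbids, so the second alternative of that theorem must hold: a nonuniform p-optimal proof system for $\taut$ exists, contradicting the hypothesis of the corollary.

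Part (2) is handled by the identical argument, replacing $\NE_k = \Co\NE_k$ by $\E_k = \NE_k$ and invoking the p-optimality variant of Theorem~\ref{t4.3} mentioned just after its statement.

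The one step requiring care, and the main obstacle, is to verify that the intuitionistic formulation of $\NE_k = \Co\NE_k$ which $T$ is assumed to prove really is of the displayed $\forall\exists$ form, with $\mu_1$ and the exponential substitutions appearing exactly as in Theorem~\ref{t4.3}, so that Kleene realizability delivers $F$ in the required format. Once canonical encodings of the universal $\NE_k$-predicate via $\mu_1$ are fixed, this is routine: any reasonable rendering of the inclusion $\Co\NE_k \subseteq \NE_k$ is a $\Pi^0_2$-style sentence whose realizer's first projection yields the required recursive index translation, and the intuitionistic derivability of the equality in $T$ yields such a realizer by assumption on $T$.
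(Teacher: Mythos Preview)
Your proposal is correct and follows essentially the same approach as the paper: fix the natural $\forall e\,\exists e'\,\forall x$ formalization of $\NE_k=\Co\NE_k$ via $\mu_1$, use recursive realizability of $T$ to extract a total recursive witness function $F$, and then invoke Theorem~\ref{t4.3} to obtain the (nonuniform) p-optimal proof system, contradicting the hypothesis. The paper's proof is in fact terser than yours; your closing paragraph about the shape of the formalization and how the realizer's first projection yields $F$ is a welcome elaboration of a point the paper handles simply by declaring what ``the natural formalization'' is.
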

	\begin{proof}
		Note that by $\NE_k=\Co\NE_k$ we mean the natural formalization $$\all e\ex e'\all x(\neg\mu_1(e,2^x_k,2^{(2^x_{k-1}+1)^{e}}) \lr \mu_1(e',2^x_k,2^{(2^{x}_{k-1}+1)^{e'}})).$$ If $T\vdash \NE_k=\Co\NE_k$ for some $k>0$, it actually give us a recursive function $F(x)$ such that $\N\models \all e,x(\neg\mu_1(e,2^x_k,2^{(2^x_{k-1}+1)^{e}}) \lr \mu_1(F(e),2^x_k,2^{(2^{x}_{k-1}+1)^{F(e)}}))$ by recursive realizability, hence by theorem \ref{t4.3} it implies the existence of a nonuniform p-optimal proof system for $\taut$. The proof of the second statement is similar.
	\end{proof}
	Note that arithmetical theorems of strong intuitionistic theories like ${\sf HA}$ (Heyting Arithmetic), ${\sf CZF}$ (Constructive Zermelo-Fraenkel) and ${\sf IZF}$ (Intuitionistic Zermelo-Fraenkel) are recursively realizable. For more information about the soundness of these theories with respect to the recursive realizability see \cite{tv88} and \cite{rat06}.
	\section{Relativized worlds}
	In this section, we will construct two oracles which imply several separations between conjectures of the two branches in figure \ref{fig}. Our constructions are based on the usual definition of forcing in arithmetic.
	\begin{definition}
		A nonempty set $\cal P$ of functions from natural numbers to $\{0,1\}$ (for every $p\in{\cal P}$, $\D(p)\subseteq \N$ and $\R(p)\subseteq\{0,1\}$ ) is a forcing notion iff for every $p\in {\cal P}$, there exists a $q\in{\cal P}$ such that $p\subsetneq q$. We call members of a forcing notion a condition.
	\end{definition}
	Let $\alpha$ be a new unary relation symbol. For every $p\in{\cal P}$ and every ${\cal L}_{BA}(\alpha)$ sentence $\phi$ we will define $p\Vdash \phi$ by induction on the complexity of $\phi$ as follows:
	\begin{enumerate}
		\item $p\not\Vdash \bot$,
		\item $p\Vdash s=t$, iff $\N\models s=t$,
		\item $p\Vdash \alpha(t)$ for some closed term $t$, iff $p(t)=1$,
		\item $p\Vdash \neg\psi$, iff for every $q\in{\cal P}$ such that $p\subseteq q$, $q\not\Vdash \psi$,
		\item $p\Vdash \psi \lor \eta$, iff $p\Vdash \psi$ or $p\Vdash \eta$,
		\item $p\Vdash \psi \land \eta$, iff $p\Vdash \neg(\neg\psi\lor \neg\eta)$,
		\item $p\Vdash \ex x\psi(x)$, iff there exists $n\in\N$ such that $p\Vdash \psi(n)$,
		\item $p\Vdash \all x\psi(x)$, iff $p\Vdash\neg \ex x\neg\psi(x)$.
	\end{enumerate}

	Our constructions can be done in the usual density argument in forcing, but we present our arguments in the constructive extension fashion, because it is more readable. For the next theorem we use the forcing notion ${\cal P}=\{p:p\text{ is a finite function from }\N\text{ to }\{0,1\}\}$. In the rest of the paper we use notation $[n]=\{0,1,...,n\}$. Also, by $t_A(n)$ for some computational machine $A$ ($\sf FP$ functions, $\Sigma^b_i$ relations, etc) we mean the time complexity of $A$ on inputs with length of $n$.
	\begin{theorem}\label{t5.1}
		There exists an oracle $\V$ such that $\disconp^\V$ is true, but $\E^\V=\NE^\V$.
	\end{theorem}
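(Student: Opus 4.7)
The plan is to construct $\V$ by a stagewise forcing argument using finite partial conditions as defined in the paper. I partition $\N$ into two infinite disjoint regions: a \emph{coding region} reserved to encode answers of $\NE^\V$ computations (to force $\E^\V = \NE^\V$), and a \emph{free region} on which I diagonalize against every candidate pair consisting of a disjoint $\Co\NP^\V$ pair together with a poly-time reduction. At each stage I decide only finitely many bits of the oracle, satisfying one requirement while preserving those already met.

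For $\E^\V = \NE^\V$, I follow a Book--Wilson style encoding. I enumerate all clocked nondeterministic oracle machines $(M_i, 2^{c_i n})_{i\ge 1}$ and for every pair $(M_i,x)$ allocate a distinct coding string $s_{i,x}$ inside the coding region, of length far exceeding the running time of $M_i$ on $x$, so that no $\NE^\V$ computation can query any coding string relevant to itself. At the stage handling $(M_i,x)$ I extend the current condition $p$ so that $s_{i,x}\in\V$ iff some extension of $p$ realises an accepting computation of $M_i^\V(x)$; a deterministic $\E^\V$-machine can then decide $M_i^\V(x)$ by a single oracle query to $s_{i,x}$.

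For $\disconp^\V$, I enumerate triples $(N_0, N_1, f)$, where $N_0, N_1$ are poly-time co-nondeterministic oracle machines (purported $\Co\NP^\V$ pair) and $f$ is a poly-time deterministic oracle reduction. At the corresponding stage I introduce a diagonal disjoint $\Co\NP^\V$ pair $(V_0, V_1)$ whose membership depends on fresh, pairwise disjoint blocks $B_0(x), B_1(x)$ of the free region: $x \in V_0$ iff no string in $B_0(x)$ lies in $\V$, and $x \in V_1$ iff no string in $B_1(x)$ lies in $\V$ while some string in $B_0(x)$ does. This makes $(V_0,V_1)$ automatically a disjoint $\Co\NP^\V$ pair. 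I then pick a fresh input $x$ large enough and extend the condition inside the free region so that either $L(N_0)^\V \cap L(N_1)^\V$ is forced to be nonempty (so $(N_0,N_1)$ is not a disjoint pair and no reduction is required), or $V_0(x)=1$ while $f^\V(x)\notin L(N_0)^\V$ (symmetrically for $V_1$), defeating $f$. Such an extension exists because $f^\V(x)$ queries only polynomially many bits, whereas each block $B_j(x)$ contains far more fresh bits that I can freely set to force the desired outcome.

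The main obstacle is ensuring that extensions used for diagonalization never corrupt coding commitments, and vice versa. This is handled by (i) the disjointness of the coding and free regions, and (ii) a length schedule in which the coding witness $s_{i,x}$ is placed strictly above every oracle bit that any diagonalization or earlier $\NE^\V$ computation could touch before stage $(i,x)$. A subtle point is the correctness of the $\NE^\V$ coding under forcing: whenever $M_i^\V(x)$ has an accepting path under some extension of the current condition, I must commit to an extension that realises it; because $M_i$ has bounded running time and later stages only add bits in positions strictly above the current witness, this commitment can be made and preserved consistently, yielding an oracle $\V$ with both desired properties.
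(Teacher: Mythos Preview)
Your overall strategy matches the paper's: a Book--Wilson style encoding on reserved positions to force $\E^\V=\NE^\V$, interleaved with diagonalization against triples (candidate $\Co\NP$ pair, candidate reduction) using fresh blocks of oracle bits. Two points, however, need to be fixed.

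\medskip
\textbf{The diagonal pair is not a $\Co\NP$ pair as you defined it.} You set $x\in V_1$ iff $B_1(x)\cap\V=\varnothing$ \emph{and} $B_0(x)\cap\V\neq\varnothing$. For the diagonalization to bite, the blocks $B_j(x)$ must have exponential size in $|x|$ (otherwise $V_0\in\P^\V$ and the pair is trivially reducible to any nonempty disjoint pair). But then the clause $B_0(x)\cap\V\neq\varnothing$ is an $\NP^\V$ predicate, so $V_1$ is a conjunction of a $\Co\NP^\V$ set and an $\NP^\V$ set, not a $\Co\NP^\V$ set. Hence $(V_0,V_1)$ is not a disjoint $\Co\NP$ pair and defeating reductions to it proves nothing about $\disconp^\V$. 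The paper avoids this by defining \emph{both} halves purely universally, namely $L^j_i=\{w:\forall y\,(|y|=|w|\to 2\langle i,j,w,y\rangle\in\alpha)\}$, and then \emph{maintaining} disjointness through the construction (at each stage at most one of the two blocks associated with the chosen witness is put fully into the oracle, and all other relevant positions are zeroed). Your ``automatic disjointness'' shortcut is exactly what breaks the complexity class of $V_1$; drop the extra clause and enforce disjointness by construction.

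\medskip
\textbf{The interaction argument needs the counting, not just a length schedule.} Your phrase ``place $s_{i,x}$ strictly above every oracle bit that any diagonalization \ldots\ could touch before stage $(i,x)$'' does not by itself resolve the circularity: a diagonalization step for a triple with time bound $m^d+d$ must read coding bits that lie \emph{inside} $[2^{m^d+d}]$, and committing each such coding bit may in turn pin down free-region bits (along an accepting path of the corresponding $\NE$ machine). What makes the argument go through in the paper is a quantitative count: there are only about $(m^d+d)^{1/c}$ coding positions below $2^{m^d+d}$, and fixing each costs at most $\mathrm{poly}(m)$ bits, so the total number of free-region bits consumed before the diagonalization is $o(2^m)$; hence some block $S_z$ of size $2^m$ remains untouched and is available for the case split on $(\phi_i,\psi_i)$. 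Your sketch asserts ``far more fresh bits'' without this estimate; you should make it explicit, since it is the hinge on which the whole construction turns.
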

	\begin{proof}
		Let $\{(\phi_i,\psi_i,R_i)\}_{\i\in\}_{i\in\N}}$ be an enumeration of $\Pi^b_1(\alpha) \times \Pi^b_1(\alpha) \times\F\P^\alpha$. We want to construct a sequence $p_0\subseteq p_1\subseteq p_2\subseteq ...$  of ${\cal P}$ such that $\V=\bigcup_i p^{-1}_i(1)$ and $\disconp^\V$ is true, but $\E^\V=\NE^\V$ if $\alpha$ is interpreted by $\V$.
		
		For every $i$ define the following $\Pi^b_1(\alpha)$ sets:
		\begin{enumerate}
			\item $L^1_i=\{w:\all |y|=|w|(2\left <i,1,w,y\right >\in \alpha)\}$,
			\item $L^2_i=\{w:\all |y|=|w|(2\left <i,2,w,y\right >\in \alpha)\}$.
		\end{enumerate}
		
		For every $i$, let $r_i$ be the first index of occurrence of $(\phi_i,\psi_i)$ in the enumeration $\{(\phi_i,\psi_i,R_i)\}_{i\in\N}$. We want to construct $\V$ such that for every $i$, either $(\phi_i,\psi_i)$ is not disjoint or $(L^1_{r_i},L^2_{r_i})$ is disjoint and it is not reducible to $(\phi_i,\psi_i)$ by $R_i$. Let $L_\NE$ be the relativized version of the $\NE$-complete problem defined in theorem \ref{t1} and $\UN(x)$ be a $\Sigma^b_1(\alpha)$ predicate such that $$(\N,A)\models \all n(n\in L\lr\UN(2^n)).$$
		 for every $A$. Let $t_{\UN}(n)\leq n^c+c$ for some $c>0$. We want to code membership of $L$ in $\V$ to make sure that $\E^\V=\NE^\V$. We use the following coding for this matter: $$(\N,\V)\models\all n(n\in L \lr 2^{(n+1)^c+c}+1\in \alpha).$$
		Note that $\UN(2^n)$ can not query $2^{(n+1)^c+c}+1$. Suppose we construct $p_{i-1}:\D(p_{i-1})\to\{0,1\}$.
		Let $m$ be big enough (we compute how big $m$ should be). Suppose $\max(t_{\phi_i}(m),t_{\psi_i}(m),t_{R_i}(m))\leq m^d+d$. Define $p_{i-1}\subseteq q$ as follow:
		\begin{enumerate}
			\item $\D(q)\subseteq [2^{m^d+d}]$,
			\item $\{2\left <r_i,v,x,y\right >: |x|=|y|=m, v\in\{1,2\}\}\cap \D(q) =\varnothing$,
			\item $(\D(q)\setminus \D(p_{i-1}))\cap\{2^{(n+1)^c+c}+1:n\in\mathbb{N}\}=\varnothing$,
			\item $\{2\left <a,v,x,y\right >: a,x,y\in\mathbb{N},v\in\{1,2\},|x|=|y|,|x|\not=m\}\setminus \D(p_{i-1})\subseteq q^{-1}(0)$
		\end{enumerate}
		Now we want to extend $q$ to make sure the coding requirement. Let $u_0=q$. For each $j>0$ such that $2^{(j+1)^c+c}+1<2^{m^d+d}$ we construct $u_j$ by the following rules:
		\begin{enumerate}
			\item If $2^{(j+1)^c+c}+1\in \D(u_{j-1})$, then put $u_j=u_{j-1}$,
			\item otherwise, 
			\begin{enumerate}
				\item if $u_{j-1}\Vdash \neg \UN(2^j)$, put $u_j=u_{j-1}\cup\{(2^{(j+1)^c+c}+1,0)\}$,
				\item  otherwise, extend $u_{j-1}$ to $u_j$ such that:
				\begin{itemize}
					\item $u_j\Vdash \UN(2^j)$,
					\item $2^{(j+1)^c+c}+1\in u_j^{-1}(1)$,
					\item $|u_j\setminus u_{j-1}|\leq (j+1)^c+c+1$, we can force this condition because only we need to know the queries of $\UN(2^j)$ in its accepting path.
				\end{itemize}
				
			\end{enumerate}
		\end{enumerate}
		Let $q'$ be unions of $u_j$ for $2^{(j+1)^c+c}+1<2^{m^d+d}$. For each $x$ such that $|x|=m$, define $S_x=\{2\left <r_i,v,x,y\right >: |y|=m, v\in\{1,2\}\}$. Let $k=|\{j\in\mathbb{N}:2^{(j+1)^c+c}+1<2^{m^d+d}\}|$, therefore we have:
		$$|q'\setminus q|\leq \sum_{j=0}^{k-1}(j+1)^c+c+1\leq k(k^c+c+1).$$
		Because $k\leq (m^d+d-c)^{\frac{1}{c}}$, we have $|q'\setminus q|\leq  (m^d+d-c)^{\frac{1}{c}}(m^d+d+1)$. If $m$ is big enough, then $\max\{(m^d+d-c)^{\frac{1}{c}}(m^d+d+1),3(m^d+d)\}<2^m$ which means there exists $z$ with length of $m$ such that $S_z\cap \D(q')=\varnothing$. Note that by our construction $q'\not\Vdash \ex x(x\in L^1_{r_i}\land x\in L^2_{r_i})$. Now we have enough rooms to extend $q'$ in such a way that either $(\phi_i,\psi_i)$ is not disjoint or $(L^1_{r_i},L^2_{r_i})$ is not reducible to $(\phi_i,\psi_i)$ by $R_i$. We compute $R_i(z)$ and answer new oracle questions by the following rule:
		\begin{enumerate}
			\item For every oracle question $y$, if $y\in S_z$, then accept $y$ and put $y$ in $\mathcal{A}$,
			\item if $(y,1)\in q'$ accept $y$,
			\item otherwise, reject $y$.
		\end{enumerate}
		Let $R_i(z)=z^*$. Let $\PP^*\subseteq \PP$ such that for every $u\in{\cal P}^*$, the following properties are true:
		\begin{enumerate}
			\item $\D(u)\subseteq [2^{m^d+d}]$,
			\item $u|_{\D(q')}=q'$,
			\item $\mathcal{A}\subseteq u^{-1}(1)$,
			\item $u^{-1}(0)\cap S_z=\varnothing$,
			\item $|\D(u)\cap S_z|\leq 2(m^d+d)$.
		\end{enumerate}
		Now there are two cases that can occur:
		\begin{enumerate}
			\item 
			If for every $u\in \PP^*$, $u\nVdash \neg\phi_i(z^*)$ and also $u\nVdash \neg\psi_i(z^*)$, then define $p':[2^{m^d+d}]\to \{0,1\}$ by the following definition:$$p'(c)=\begin{cases}q'(c) & c\in \D(q')\\1 & c\in S_z\\0 & \text{o.w.}\end{cases}$$
			Note that $p'\nVdash \neg\phi_i(z^*)$ and also $p'\nVdash \neg\psi_i(z^*)$, because if for example $p'\Vdash \neg\phi_i(z^*)$, then there exists a subset $F\subseteq [2^{m^d+d}]$ such that $p'|_{F}\in\PP^*$ and $p'|_{F}\Vdash \neg\phi_(z^*)$ which contradicts our assumption, hence $p'\nVdash \neg\phi_i(z^*)$ and also $p'\nVdash \neg\psi_i(z^*)$, but this implies $p'\Vdash \phi_i(z^*)\land\psi_i(z^*)$, because $p'$ has answers for the oracle questions for all of the numbers with length of less than $m^d+d+1$. This means that $\phi_i$ and $\psi_i$ are not disjoint relative to our construction and we define $p_i$ as $p'$.
			\item Otherwise, without loss of generality we can assume that there exists a $u\in\PP^*$ such that $u\Vdash \neg\phi_i(z^*)$. Let $S=\{2\left <r_i,1,z,y\right >: |y|=m\}$ and define $p_i$ as a condition by the following properties:
			\begin{enumerate}
				\item $\D(p_i)=[2^{m^d+d}]$,
				\item $u\subseteq p_i$,
				\item $S\subseteq p^{-1}_i(1)$,
				\item $[2^{m^d+d}]\setminus(\D(u)\cup S)\subseteq p^{-1}_i(0)$.
			\end{enumerate}
			Therefore, we have the following facts:
			\begin{enumerate}
				\item $p_i\Vdash \neg\phi_i(z^*)$,
				\item $p_i\Vdash z\in L^1_{r_i}$.
			\end{enumerate}
			This implies that $(L^1_{r_i},L^2_{r_i})$ is not reducible to $(\phi_i,\psi_i)$ by $R_i$, relative to our construction.
		\end{enumerate}
		By explanations of the above cases our oracle construction is completed.
	\end{proof}
	In the rest of the paper we want to construct an oracle $\cal W$ such that $\tfnp^{\cal W}={\sf FP}^{\cal W}$, but there is no nonuniform p-optimal proof system for $\taut^{\cal W}$. We will use the Kolmogorov generic construction idea that is defined in \cite{bfkrv10}. Here we borrow definitions and notations from \cite{bfkrv10}. Note that because we explained how to code binary strings in natural numbers and vice versa, we use both natural numbers and strings in the rest of the paper without loss of generality.
	\begin{definition}
		For every partial computable function $F(x,y)$ and every $x,y\in\{0,1\}^*$, the Kolmogorov complexity of $x$ conditional to $y$ with respect to $F$, which will be denoted as $C_F(x|y)$, is defined as follows:$$C_F(x|y)=\min\{|e|:e\in\{0,1\}, F(e,y)=x\}$$
	\end{definition}
	We will say that $C_F(x|y)$ for some partial computable function $F(x,y)$ is a universal method iff for every partial computable $G(x,y)$, there exists a constant $k$ such that $$\all x,y\in\{0,1\}^*(C_F(x|y)\leq C_G(x|y)+k).$$ 
	According to the Solomonoff-Kolmogorov theorem there exists a universal method. We will show it by $C(x|y)$. Also, we define the unconditional Kolmogorov complexity of $x$ with $C(x)=C(x|\lambda)$ in which $\lambda$ is the empty string.
	Here we list some properties of Kolmogorov complexity that are stated in \cite{bfkrv10}.
	\begin{enumerate}
		\item For all $x$ and $y$, $C(x|y)\leq C(x)+O(1)$.
		\item There exists a constant $k$ such that for all $x$, $C(x)\leq |x|+k$.
		\item  For all $n$ and $m$, there is an $n$ bit string $x$ such that $C(x)\geq n-m$. In particular, for every $n$ there is an $n$ bit string $x$ such that $C(x)\geq n$. Such strings are called incompressible. 
		\item For every computable function $f(x_1,...,x_n)$, $$C(f(x_1,...,x_n))\leq 2|x_1|+2|x_2|+...+2|x_{n-1}|+|x_n|+O(1).$$
	\end{enumerate} 
	For every $n>0$ fix a $n2^n$ bit string $Z_n$ such that $C(Z_n)\geq n2^n$. Divide $Z_n$ into $2^n$ string $z^n_1$ to $z^n_{2^n}$, each of length $n$. Define ${\cal K}=\{\llcorner\left<i,z^j_i\right >\lrcorner:\ex k\in\N(j=2^1_k), i\in\{0,1\}^j\}$. We define the forcing notion ${\cal P}_K=\{p: p\text{ is a function from }{\cal K}\text{ to }\{0,1\}, {\cal K} \setminus \D(p) \text{ is infinite} \}$.
	\begin{theorem}\label{t5.2}
		There exists an oracle $\cal W$ such that there is no nonuniform p-optimal proof system for $\taut^{\cal W}$, but $\tfnp^{\cal W}={\sf FP}^{\cal W}$.
	\end{theorem}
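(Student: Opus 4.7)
The plan is to construct $\W$ as the union of an increasing chain $p_0 \subsetneq p_1 \subsetneq \cdots$ of conditions in ${\cal P}_K$, with each stage handling one of two classes of requirements via a bookkeeping enumeration: (D) diagonalization against a candidate pair $(P_e, q_e)$, where $P_e$ is a polynomial-time oracle-access proof system candidate and $q_e$ a polynomial simulation bound, and (T) collapse of a polynomial-time NP-search problem $(b_e, R_e^{\alpha})$ into $\FP^{\W}$.

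For a (D)-stage handling $(P_e, q_e)$: select a tower-of-two length $n = 2^1_k$ so large that no ${\cal K}$-slot at length $n$ has been decided by the current condition and that $q_e$ evaluated at any polynomial in $n$ is vastly smaller than $n \cdot 2^n$. Commit $\W$ to contain exactly the pairs $\langle i, z^n_i\rangle$ for $i \in \{0,1\}^n$. There is then a natural ``reveal'' proof system $Q$ for $\taut^{\W}$ which, for each $i \in \{0,1\}^n$, produces a short proof of a propositional tautology $\tau^n_i$ whose truth depends on the length-$n$ witness $\alpha$ attaches to $i$. A Kolmogorov-incompressibility argument then shows that if $P_e$ were to nonuniformly $q_e$-simulate $Q$ on the family $\{\tau^n_i\}_i$, the resulting $P_e$-proofs together with fixed descriptions of $P_e$, $q_e$, and $Q$ would yield an effective description of $Z_n$ of length strictly less than $n \cdot 2^n$, contradicting $C(Z_n) \geq n \cdot 2^n$; hence some $i$ witnesses the failure of the simulation. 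For a (T)-stage handling $(b_e, R_e)$: exploit that $\W \subseteq {\cal K}$, so any oracle position outside ${\cal K}$ is automatically $0$. Extend the condition minimally to fix the polynomially many oracle bits that a putative $\FP$ algorithm would query in order to extract a witness for $R_e^{\W}$ on the relevant inputs. A density/genericity argument then guarantees that in the final $\W$ every total $(p,R) \in \tfnp^{\W}$ is witnessed by an $\FP$ algorithm that simulates $R^{\W}$, reads the oracle's committed bits via queries to $\W$, and defaults to $0$ on positions outside ${\cal K}$.

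The main obstacle is coordinating the two requirement types: each (D)-stage commits exponentially many bits at a single length, which threatens the genericity underpinning the (T)-argument. The tower-of-two spacing $n = 2^1_k$ is essential --- consecutive (D)-lengths are so far apart that polynomial-time computations for (T) on inputs of length $m$ can only interact with (D)-commitments at lengths below $m$ (absorbable as advice into the $\FP$ algorithm) or astronomically above $m$ (never queried in polynomial time). The delicate verification is that the total side-information introduced by the (T)-stages preceding a given (D)-stage at length $n$ remains negligible compared to $n \cdot 2^n$, so that the Kolmogorov bound underlying the diagonalization is preserved even when the putative short encoding of $Z_n$ is allowed to use this accumulated side-information as auxiliary input.
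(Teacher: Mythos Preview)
Your proposal inverts the role that Kolmogorov complexity plays in the paper's proof, and both stage types have genuine gaps.

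For the (D)-stages, the compression argument does not go through. After you commit $\W$ at level $n$ to encode $Z_n$, the candidate $P_e$ has oracle access to exactly this information: recovering $\tau^n_i$ (and hence $z^n_i$) from a purported short $P_e$-proof $\pi_i$ means computing $P_e^{\W}(\pi_i)$, so the resulting description of $Z_n$ is only short \emph{relative to} $\W$ and yields no bound on $C(Z_n)$. The paper's diagonalization against a candidate $f_a$ is instead the standard one and uses no incompressibility: one fixes tautologies $\theta_{a,k}$ (Paris--Wilkie translations of ``$\alpha$ vanishes on block $B^a_k$'') and asks whether some extension forces a short $f_a$-proof of some $\theta_{a,k}$. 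If yes, one unqueried bit of $B^a_k$ is set to $1$, making $f_a$ unsound; if no, all $B^a_k$ for $k\ge c_a$ are committed to $0$, making the $\theta_{a,k}$ true while $f_a$ lacks short proofs of this poly-time recognizable family. The crucial feature here is that ${\cal P}_K$ permits conditions with infinite domain (so that this infinite commitment is a single forcing step), not the incompressibility of $Z_n$.

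For the (T)-stages, ``genericity'' and ``defaulting to $0$ outside ${\cal K}$'' do not by themselves produce an $\FP^{\W}$ algorithm; you have not said what the algorithm does on input $x$, nor how it locates a witness $y$ without searching exponentially many candidates. This is precisely where the paper uses incompressibility. One first relativizes everything to a ${\sf PSPACE}$-complete set so that $\FP={\sf FPSPACE}$ holds; the $\FP^{\W}$ solver for $(r_a,\phi_a)$ then iterates: using ${\sf PSPACE}$ it finds the least $v$ satisfying $\phi_a(x,v)$ relative to the currently known fragment of $\W$, checks $\phi_a(x,v)$ against the true $\W$, and on failure learns one new element of $\W$ at the relevant tower-level $m$. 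Each learned element is pinned down by an $O(\log|x|)$-bit pointer into the query list of the failed check, so after $l'$ failures one obtains a description of $Z_m$ of length roughly $m(2^m-l')+l'\cdot O(\log|x|)+O(m\log m)+O(|x|)$; incompressibility of $Z_m$ then forces $l'$ to be polynomial in $|x|$. This learning-plus-Kolmogorov-bound mechanism is the core of the construction and is absent from your outline.
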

	\begin{proof}
		Following the argument in \cite{bfkrv10}, we construct an oracle $\W$ such that there is no nonuniform p-optimal proof system for $\taut^\W$, but $\tfnp^\W = {\sf FP}^\W$, assuming $\sf FP = FPSPACE$.
		As we will see, the oracle construction still works if we first relativize things with a $\sf PSPACE$-complete set $H$ and then construct $\W$ with the desired properties. Note that relativizing to
		$H$ implies ${\sf FP}^H = {\sf FPSPACE}^H$ and hence we are free from the assumption $\sf FP = FPSPACE$.
		Also, note that relativizing first to $H$ and then relativizing to $\W$ is equivalent to relativizing with $H \oplus \W$ in which $A \oplus B =\{ 2n : n\in A\}\cup\{ 2n + 1 : n \in B\}$. Let $\{f_i(x)\}_{i\in\N}$
		and $\{(r_i, \phi_i(x, y))\}_{i\in\N}$ be enumerations of ${\sf FP}(\alpha)$ functions and $\N \times\Delta^b_1(\alpha)$ in which $\phi_i(x, y)$
		defines a poly time relation with access to $\alpha$. In the rest of the proof we construct a
		sequence $p_0 \subseteq p_1 \subseteq...$ of ${\cal P}_K$ such that $\W = \bigcup_i p^{-1}_i(1)$ and there is no nonuniform p-optimal proof system for $\taut^\W$, but $\tfnp^\W = {\sf FP}^\W$ if $\alpha$ is interpreted by $\W$. For every $i, k \in \N$ define $\theta_{i,k}$ be the Paris-Wilkie translation of $\Pi^b_1(\alpha)$ sentence
		$\all x(|x| =\bar{3}\bar{n} +\bar{3}\to \neg\alpha(x))$ in which $n = 2^1_{\left<i,k\right>}$. For every $i, j \in \N$ define $S^i_j = \{\theta_{i,k} : k \geq j \}$
		and $B^i_j = \{x : x \in {\cal K}, |x| = 3(2^1_{\left<i,j\right>} + 1)\}$. Suppose we construct $p_{i-1} : \D(p_{i-1})\to \{0, 1\}$. We
		extend $p_{i-1}$ to $p_i$ as follows:
		\begin{enumerate}
			\item  If $i = 2a$, then we want to make sure that $f_a$ will not be a proof system or $f_a$ will
			not have short proofs for members of the set $S^a_{c_a}$ for some $c_a$ relative to $\W$. Let
			$t_{f_a}(n) \leq n^d + d$. Choose $c_a$ such that $\D(p_{i-1})\cap\left(\bigcup_{c_a\leq j}B^a_j\right)=\varnothing$ and also for every $n \geq c_a$,
			$4nd^{d\log_2 4n} + d < 2^n$. Now, there are two cases that can happen:
			\begin{enumerate}
				\item There is a $p_{i-1} \subseteq q\in {\cal P}_K$, some $\theta\in S^a_{c_a}$ and $\pi\in \N$ such that
				$$q \Vdash |\pi| \leq |\theta|^{d\log_2 |\theta|} + d \land f_a(\pi) = \theta.$$
				This implies that there is a $p_{i-1} \subseteq q' \in {\cal P}_K$ such that $|\D(q') \setminus \D(p_{i-1})| \leq |\theta|^{d \log_2 |\theta|} + d$
				and $q'\Vdash |\pi|\leq |\theta|^{d \log_2 |\theta|} + d\land f_a(\pi) = \theta$, because $f_a$ only needs at most $|\theta|^{d \log_2 |\theta|} +
				d$ query answers from $\W$ on input $\pi$. Let $\theta$ be $\theta_{a,k}$ for some $k$. This means
				$|\theta|^{d \log_2 |\theta|} + d < |B^a_k| = 2^m$ in which $m = 2^1_{ \left<a,k\right>}$, hence there is a $z \in B^a_k \setminus \D(q')$. Define
				$p_i := q'\cup \{ (z, 1)\}$. This implies that $f_a$ relative to $\W$ will not be a proof system for $\taut^\W$,
				because it proves $\theta_{a,k}$, but $\theta_{a,k}$ is not a tautology relative to $\W$,
				\item otherwise, we define $p_i := p_{i-1}\cup\{(x,0):\ex k\in\N(k\geq c_a \land x\in B^a_k)\}$. Note that
				in this case, for every $\theta\in S^a_{c_a}$, there is no $|\theta|^{d \log_2 |\theta|} + d$ length proof of $\theta$ in $f_a$
				relative to $\W$.
			\end{enumerate}
			So by construction of $p_i$ we make sure that $f_a$ is not a proof system or $f_a$ is not a
			nonuniform p-optimal proof system for $\taut^\W$, because $S^a_{c_a}$ is poly time decidable.
			\item  If $i = 2a + 1$, then we want to make sure that $(n^{r_a} + r_a, \phi_a(x, y))$ will not define a
			$\tfnp$ problem relative to $\W$ or it can be computed by some function in ${\sf FP}^\W$. The
			construction in this case is very easy. If there is a $p_{i-1} \subseteq q \in {\cal P}_K$ such that
			$q\Vdash \ex x\all y(|y|\leq |x|^{r_a}+r_a\to\neg\phi_a(x,y))$, then there is some $p_{i-1} \subseteq q'\in {\cal P}_K$ such that
			$|\D(q')\setminus \D(p_{i-1})|$ is finite and $q'\Vdash \ex x\all y(|y|\leq |x|^{r_a}+r_a\to\neg\phi_a(x,y))$. In this case we define
			$p_i := q'$, otherwise if there is no such extension, then we define $p_i := p_{i-1}$. 
		\end{enumerate}
		Suppose
		$(n^{r_a} + r_a, \phi_a(x, y))$ defines a $\tfnp$ problem relative to $\W$. Now we want to show there
		is a function $f\in {\sf FP}^\W$ such that it solves $(n^{r_a} + r_a; \phi_a(x, y))$. Let $t_{\phi_a}(x,y)\leq (|x|+ |y|)^b + b$,
		then on input $u$ with solution $v$, $\phi_a(u, v)$ asks at most $(|u| + |u|^{r_a} + r_a)^b + b$ questions
		from $\W$. Choose $e$ such that for all $n$, $(n+ n^{r_a} + r_a)^b + b \leq n^e + e$. The function $f$ works as follows on input $x$:
		
		Let $m = 2^1_k$ be the biggest tower of two such that $m \leq 4|x|^{2e}$. Note
		that to compute a solution of this problem we only need to know the oracle
		answers for members $\bigcup_{i\leq m} Y_i$. First, $f$ asks the value of $\W$ for every member of
		$\bigcup_{i\leq \log_2 m} Y_i$ and puts the answers in $G$. Then it proceeds as the following procedure by
		starting with $Q_1 = \varnothing$:
		In the $i$'th iteration, using the power of $\sf PSPACE$ (we assumed that $\sf FP = FPSPACE$)
		find the least $|v_i| \leq |x|^{r_a} + r_a$ such that $\phi_a(x, v_i)$ is true relative to $G\cup Q_i$. If
		$\phi_a(x, v_i)$ is true relative to $\W$, then halt and output $v_i$, otherwise there is a
		$u_i \in (\W \cap Y_m) \setminus Q_i$ such that it is the first number in which it is queried in
		computation of $\phi_a(x, v_i)$ relative to the $\W$ such that $u_i \in \W$, but $u\not\in Q_i$. Define
		$Q_{i+1} = Q_i\cup\{u_i\}$ and repeat this procedure.
		
		First, note that in every iteration, this procedure indeed finds a $v$ such that
		relative to $G\cup Q_i$, $\phi_a(x, v)$ holds, because in that case we can find a condition $p_i \subset    q \in {\cal P}_K$ such that $G\cup Q_i\subseteq q^{-1}(1)$ and hence $q$ forces that $(n^{r_a}+r_a, \phi_a(x, y))$ is not a
		$\tfnp$ problem (note that if $Y_m\cap \W = \varnothing$, then we should find the solution
		of the problem relative to $\W$ in the first iteration, hence the construction of the
		previous conditions which make sure some proof systems are not nonuniformly p-optimal will not cause a problem in finding such a $q$). After some iterations
		$f$ will find a solution of this $\tfnp$ problem relative to $\W$. If we prove that the
		number of iterations are polynomial in $|x|$, then we are done. Suppose after $l$'th
		iteration we find the solution. This means that $|Q_l| = l - 1$. Let $l' = l - 1$.
		Note that for every $j < l$, $u_j$ can be described by the code of poly time relation $\phi_a(x, y)$, $x$, $G\cup Q_j$
		and an $e\log_2 |x|$ bit string which it shows the order number of $u_j$ among
		the queries of $\phi_a(x, v_j)$, hence $Q_l$ can be described by a string of length $l'(e\log_2 |x|)+
		O(m\log_2 m) + 2|x| + O(1)$ (note that $G$ has at most $m + \log_2 m + \log_2 \log_2 m + ...$
		of strings of length at most $\log_2 m$, hence it can be described by a string of length $O(m\log_2 m)$ bits). Let $p$ be the concatenation of all $y$'s from
		$\llcorner \left<i,y\right>\lrcorner\in Y_m\setminus Q_l$ according to the order on $i$'s, hence $|p| = m(2^m − l')$. Note that
		$Z_m$
		can be described using $p$ by inserting the second component of members of
		$Q_l$ in places that the first component refer to, hence by the fact that $C(Q_l) \leq
		l'(e\log_2 |x|) + O(m\log_2 m) + 2|x| + O(1)$, we have:
		$$m2^m \leq C(Z_m) \leq m(2^m − l') + 2l'(e\log_2 |x|) + O(m\log_2 m) + 4|x| + O(1).$$
		This implies $l'(m−2e\log_2 |x|) \leq O(m\log_2 m)+4|x|+O(1)$. Note that by definition
		of $m$, $4|x|^{2e} < 2^m$, hence $2+2e \log_2 |x| < m$. This implies $m-2e \log_2 |x| > 2$, hence
		$2l' \leq O(m \log_2 m)+4|x|+O(1)$ which means $l \leq O(4|x|2e \log_2(4|x|^{2e}))+2|x|+O(1)$ and this completes the proof.
	\end{proof}
	It is worth mentioning that the forcing notion that was used in \cite{bfkrv10} is a finite condition forcing,
	but the forcing notion ${\cal P}_K$ permits us to have conditions with an infinite domain.
	Note that we essentially use this property of ${\cal P}_K$ in our construction. We do not know whether
	(nonuniform) p-optimal proof systems for $\taut$ exist relative to the original oracle that
	defined in \cite{bfkrv10}. Note that the existence of oracles $\V$ and $\W$ imply several separations
	between conjectures of figure \ref{fig}. The following corollary shows several independence results (not all
	of the separations) of the conjectures of the branches in figure \ref{fig}.
	\begin{corollary}
		Define the following sets:
		\begin{enumerate}
			\item $A=\{\con,\conN\}$,
			\item $B=\{\sat_c,\tfnp_c,\disconp_c\}$.
		\end{enumerate}
		Then for every conjecture $Q \in A$ and every conjecture $Q' \in B$, $Q$ and $Q'$ do not imply each
		other in relativized worlds.
	\end{corollary}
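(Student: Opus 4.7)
The plan is to obtain all six separations at once by combining the oracles $\V$ and $\W$ from Theorems~\ref{t5.1} and~\ref{t5.2} with the downward implications of Figure~\ref{fig}. Relative to $\V$ the conjecture $\disconp_c$ is true by construction, and the chain $\disconp_c \Rightarrow \tfnp_c \Rightarrow \sat_c$ propagates this to every member of $B$; simultaneously $\E^\V = \NE^\V$ yields, via the Krajíček-Pudlák theorem cited in the preceding section, a p-optimal and hence also a nonuniform p-optimal proof system for $\taut^\V$, so both $\con$ and $\conN$ are false in $\V$. Thus $\V$ alone refutes $Q' \Rightarrow Q$ for every pair $(Q,Q') \in A \times B$.

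For the reverse direction, $\W$ handles $Q \not\Rightarrow Q'$ whenever $Q' \in \{\tfnp_c, \disconp_c\}$. Theorem~\ref{t5.2} gives that $\conN$ is true in $\W$, whence $\con$ is true too via $\conN \Rightarrow \con$, putting all of $A$ on the true side. The companion property $\tfnp^\W = \FP^\W$ furnishes a trivial complete problem for $\tfnp^\W$ (any $\FP^\W$ function suffices), so $\tfnp_c$ fails, and by contraposition of $\disconp_c \Rightarrow \tfnp_c$ so does $\disconp_c$.

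The remaining and main obstacle is $Q \not\Rightarrow \sat_c$ for $Q \in A$, since $\tfnp^\W = \FP^\W$ only yields $\neg \tfnp^w_c$ in $\W$ while the proposition at the start of Section~3 runs in the wrong direction $\tfnp^w_c \Rightarrow \sat_c$. To close this case I would augment the construction of $\W$ by a third family of diagonalization stages, interleaved between the existing even and odd blocks: each stage enumerates a pair of polynomial-time candidates $(f_a,g_a)$ for a proof system for $\sat$ together with a simulating translator, and on a fresh Kolmogorov-random window of the oracle forces a satisfiable formula whose satisfying assignment cannot be recovered in polynomial time from any short $g_a$-translation of an $f_a$-proof. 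Each such stage commits only polynomially many bits of the oracle, so the forcing notion $\mathcal{P}_K$ accommodates it without difficulty; the delicate point will be to keep the new windows disjoint from the $B^a_j$ and $Y_m$ blocks driving the existing $\conN$ and $\tfnp^\W = \FP^\W$ guarantees, so that the Kolmogorov-counting inequality underpinning the odd-step argument still closes in the augmented construction.
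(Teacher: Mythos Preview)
Your treatment of the oracle $\V$ and of the pairs $(Q,\tfnp_c)$, $(Q,\disconp_c)$ under $\W$ is exactly the paper's argument. The divergence is in the last case, $(Q,\sat_c)$, where you perceive a gap and propose to repair it by interleaving a third diagonalization into the construction of $\W$. That extra work is unnecessary: the property $\tfnp^\W=\FP^\W$ already forces $\neg\sat_c^\W$ directly, without any appeal to Figure~\ref{fig} or to $\tfnp^w_c$.

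Here is the missing one-line observation. Let $P_0$ be the ``assignment'' proof system for $\sat$, i.e.\ $P_0(\langle\phi,a\rangle)=\phi$ whenever $\ssat(\phi,a)$ holds. For an arbitrary proof system $Q$ for $\sat$, the search problem ``given $\pi$, output some $a$ with $\ssat(Q(\pi),a)$'' is total (because $\R(Q)=\sat$) and lies in $\tfnp$. Under $\tfnp^\W=\FP^\W$ it is solved by some $f\in\FP^\W$, and then $\pi\mapsto\langle Q(\pi),f(\pi)\rangle$ is a p-simulation of $Q$ by $P_0$. Hence $P_0$ is p-optimal for $\sat^\W$, i.e.\ $\sat_c$ fails relative to $\W$. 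This relativizes without difficulty, so Theorems~\ref{t5.1} and~\ref{t5.2} alone suffice, which is precisely what the paper asserts. Your proposed augmentation of $\W$ would presumably also work, but it is a substantial detour around a two-line argument.
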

	\begin{proof}
		The corollary follows from theorems \ref{t5.1} and \ref{t5.2}.
	\end{proof}
	\subsection*{Acknowledgment}
	We are indebted to Pavel Pudl\'ak for many invaluable discussions that we have had about this work. We are also grateful to him for his careful readings of the drafts of this paper, his useful comments and suggestions about it, and also pointing out many small errors which led to improvements in its presentation. Additionally, We are grateful to Moritz M\"{u}ller for his careful reading of the draft of this paper and for his useful suggestions. We also thank Michael Rathjen for answering our question about realizability and introducing reference \cite{rat06} to us. This research was partially supported by the ERC Advanced Grant
	339691 (FEALORA).

\end{document}